\newtheorem{thm}{Theorem}[section]
\newtheorem{cor}[thm]{Corollary}
\newtheorem{lem}[thm]{Lemma}
\theoremstyle{definition}
\newtheorem{exam}[thm]{Example}
\newtheorem{defn}[thm]{Definition}
\theoremstyle{remark}
\newtheorem{rem}[thm]{Remark}
\numberwithin{equation}{section}
\begin{document}
\title[]{\textsc{\textbf{Conditional Carleson measures and related operators on Bergman spaces}}}
\author{\textsc{A. Aliyan, Y. Estaremi and A. Ebadian}}
\address{\textsc{A. Aliyan}} \email{aliyan.amir1404@gmail.com}
\address{\textsc{Y. estaremi}} \email{estaremi@gmail.com}
\address{\textsc{A. Ebadian}} \email{ebadian.ali@gmail.com}

\address{Department of mathematics, Payame Noor university , P. O. Box: 19395-3697, Tehran,
Iran\\
}

\thanks{}
\thanks{}
\subjclass[2010]{}
\keywords{Bergman space,Conditional expectation,generalized Carleson measure,conditional Carleson measure,Mobius transformation.}
\date{}
\dedicatory{}

\begin{abstract}
In this paper first we define generalized Carleson measure. Then we consider a special case of it, named conditional Carleson measure on the Bergman spaces. After that we give a characterization of conditional Carleson measures on Bergman spaces. Moreover, by using this characterization we find an equivalent condition to boundedness of weighted conditional expectation operators on Bergman spaces.
\end{abstract}
\maketitle
\commby{}

\section{Introduction and Preliminaries}
Let $\mathbb{C}$  be the complex plane,
$\mathbb{D} = \{ z \in \mathbb{C}:\left| z \right| < 1\}$ and $\mathbb{T} = \{ z \in \mathbb{C}:\left| z \right| = 1\}$. Likewise, we write $\mathbb{R}$ for the real line. The normalized
area measure on $\mathbb{D}$ will be denoted by $dA$. As is known $dA(z) = \dfrac{1}{\pi}dxdy = \dfrac{1}{\pi}rdrd\theta$, in which $z = x + iy = r{e^{i\theta }}$.
For $0< p< +\infty $ and $-1< \alpha< +\infty $, the (weighted) Bergman space
$L_{a }^{p,\alpha}={L}_{a }^{p,\alpha}(\mathbb{D})$ of $\mathbb{D}$, is the space of all analytic functions in $ {{L}^{p}}(\mathbb{D},d{{A}_{\alpha }})$,  where
 $$d{{A}_{\alpha }}(z)=(\alpha +1){{(1-{{\left| z \right|}^{2}})}^{\alpha }}d{{A}}(z).$$
Let $f\in{{L}^{p}}(\mathbb{{D}},d{{A}_{\alpha }})$ and
$${{\left\| f \right\|}_{p.\alpha }}={{\left[ \int_{\mathbb{D}}{{{\left| f(z) \right|}^{p}}d{{A}_{\alpha }}(z)} \right]}^{{}^{1}/{}_{p}}}.$$
Then for $1\leq p<\infty$, the space ${{L}^{p}}(\mathbb{D},d{{A}_{\alpha }})$ is a Banach space with respect to the norm ${{\left\| . \right\|}_{p.\alpha }}$. In addition, for $0< p<1$, the space ${{L}^{p}}(\mathbb{D},d{{A}_{\alpha }})$ is a complete metric space with respect to the metric defined by
$d\left( f,g \right)=\left\| f-g \right\|_{p.\alpha }^{p}$.\\

Recall that for $1 < p < \infty$ and $ -1<\alpha ,$ the projection $P_{\alpha}$ from  $ L^{\alpha,p}$  into $ L_a^{\alpha,p}$ is given by
$$P_\alpha(f)(w)= \mathop{\int}_{\mathbb{D}} \frac{f(z)}{(1- w \overline z)^{2+\alpha}}dA_\alpha(z) , \quad w \in {\mathbb{D}}.$$
The projection $P_\alpha$ is called the (weighted) Bergman projection on $\mathbb{D}$. Let $\mu$ be a finite positive Borel measure on $\mathbb{D}$ and $0<p<\infty$. We say that $\mu$ is a Carleson measure on the Bergman space $L_a^p$, if there exists a  constant $C > 0$ such that
\[{\int_{\mathbb{D}} {\left| {f(z)} \right|} ^p}d\mu (z) \le C{\int_{\mathbb{D}} {\left| {f(z)} \right|} ^p}dA(z),\]
for all $f\in L_a^p$.
In addition, the functions  $$ K_\alpha(\omega,z)= \frac{1}{(1- w \overline z)^{2+\alpha}}\qquad z,\omega\in\mathbb{D}$$
are called the (weighted) Bergman kernels of $ \mathbb{D}$.\\

Let $(\mathcal{X},\mathcal{M},\mu)$  be a complete sigma-finite measure space and let $\mathcal{A}$ be a sigma-subalgebra of $\mathcal{M}$ such that $(\mathcal{X},\mathcal{A},\mu\mid_\mathcal{A})$
is also sigma-finite.The collection of (equivalence classes modulo sets of zero measure) $\mathcal{M}$-measurable complex-valued
functions on $\mathcal{X}$ will be denoted $L^0(\mathcal{M})$.
 Moreover, we let
 $L^p(\mathcal{M}) = L^p(\mathcal {X},\mathcal{ M},\mu)$ and
$ L^p(\mathcal{A}) = L^p(\mathcal{X},\mathcal{A},\mu\vert_\mathcal{A})$, for $1\leq p<\infty$. As a consequence of the Radon-Nikodym theorem we have that for each non-negative function $f \in L^0(\mathcal{M})$
there exists a unique non-negative $\mathcal{A}$-measurable function  $E_{\mathcal{A}}( f )\in L^0(\mathcal{A})$ such that
$\int\limits_{\Delta }{E(f)d\mu}=\int\limits_{\Delta}{f d\mu }$
for all $ \Delta ~\in\mathcal{A}$ . The function $ E_{\mathcal{A}}( f )$ is called the conditional expectation of $f$ with respect to $\mathcal{A}$.

Note that if $E _{\mathcal{A}} P_{\alpha} = P_{\alpha} E_{\mathcal{A}}$ on $L^p(\mathbb{D}, \mathcal{M}, \mathcal{A})$, then $L_a^p(\mathbb{D})=L_a^{p,0}(\mathbb{D})$ is invariant under the conditional expectation operator $E_{\mathcal{A}}$, i.e.,  $E_{\mathcal{A}}(L_{a}^{p}(\mathbb{D})) \subseteq L_{a}^{p}(\mathbb{D})$.
Suppose that $\mathcal{M}$ is the sigma-algebra of  the Lebesgue  measurable sets in $\mathbb{D}$, $\mathcal{ A}$ is a subalgebra of $\mathcal{M}$  and $E=E_{\mathcal{A}}$  is the related conditional expectation operator.
 For a non-constant analytic self-map $\varphi$ on $\mathbb{D}$ and $ z\in \mathbb{D}$,
put $c_z =\{\zeta \in \mathbb{D}_0 : \varphi(\zeta) = \varphi(z)\}$, where $\mathbb{D}_{0} = \{\zeta \in \mathbb{D} :\varphi '(z)(\zeta)\neq 0\}$.\\
The map $\varphi$ has finite multiplicity if we can find some $n \in \mathbb{N}$ such that for each $ z \in \mathbb{D}$, the
level set $c _{z}$ consists of at most $n$ points.
 By $\mathcal{A}= \mathcal{A}(\varphi)$ we denote the sigma-algebra generated by $\{ \varphi^{-1}(U): U \subset \mathbb{C}~ is ~open \}$ .
  For the Lebesgue measure $m$ on $\mathbb{C}$ we get that $h = \frac{{d{A_\circ }{\varphi ^{ - 1}}}}{{dm}}$ is almost everywhere finite valued, because the finite measure $A \circ \varphi^{-1}$ is absolutely continuous with respect to $m$. Now we define the weighted conditional expectation operator on $L_{\alpha }^{p.\alpha }$ as follow:
\begin{defn} Let $u$ be an analytic function on $\mathbb{D}$,  $0<p<\infty$ and   $\alpha ,\beta >-1$. If $\mathcal{A}$ is a sigma-subalgebra of the sigma-algebra of the Lebesgue measurable sets, then the weighted conditional expectation operator  $T=M_{u}E_{\mathcal{A}}$ on $L_{\alpha }^{p.\alpha }$ is defined as
  $T(f)(z)=u(z)E_{\mathcal{A}}(f)(z)$, for each $f\in L_{\alpha }^{p.\alpha }$ and $z\in \mathbb{D}$ such that $E_{\mathcal{A}}(f)$ is an analytic function.
\end{defn}

Joseph Ball in \cite{jb} studied conditional expectation on $H^p$ and he find that $H^p$ is invariant under $E_{\mathcal{A}(f)}$ for an inner function $f$. After that Aleksandrov  in \cite{al} proved that for a complete sigma-subalgebra $\mathcal{A}$ of $\mathcal{M}$, the sigma-algebra of Lebesgue measurable subsets of $\mathbb{T}$. Then $\mathcal{A}=\mathcal{A}(f)$ for some inner function $f$ if and only if $PE_{\mathcal{A}}=E_{\mathcal{A}}P$, in which $P$ is the Riesz projection, i.e. the orthogonal projection from $L^2$ onto the Hardy space $H^2$. If we compose the discussions presented in \cite{hjj}, \cite{cs} and \cite{jh}, then we will get a formula for conditional expectation operator corresponding to $\mathcal{A}(\varphi)$, in which $\varphi$ is an arbitrary non-constant analytic self-map on $\mathbb{D}$.\\
In \cite{jm}, the authors investigated some basic properties of some conditional expectation-type Toeplitz operators on Bergman spaces. In this paper we consider the weighted conditional expectation operators on the Bergman spaces and we will give some results based on Carleson measures. Here we recall some basic Lemmas that we need them in the sequel.
\begin{lem}\cite{5}\label{3}  If $0<p <\infty$  and  $\alpha >-1$. Then
$$|f(z)|\leq \frac{{\Vert f \Vert}_{p,\alpha }}{{(1-\vert z |^{2})}^{(2+\alpha)/p}}$$
for all  $f\in L_a^{p,\alpha}$ and $z\in \mathbb{D}$.
\end{lem}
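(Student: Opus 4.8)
The plan is to reduce the estimate at a general point $z\in\mathbb{D}$ to the estimate at the origin by transporting $f$ through the involutive disc automorphism $\varphi_z(w)=\dfrac{z-w}{1-\overline{z}w}$, and then to bound point evaluation at $0$ directly from the radial structure of the measure $dA_\alpha$.

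First I would record the elementary identities for $\varphi_z$: it is an involution with $\varphi_z(0)=z$, $\varphi_z'(0)=-(1-|z|^2)$, $|\varphi_z'(w)|=\dfrac{1-|z|^2}{|1-\overline{z}w|^2}$, and $1-|\varphi_z(w)|^2=\dfrac{(1-|z|^2)(1-|w|^2)}{|1-\overline{z}w|^2}=(1-|w|^2)\,|\varphi_z'(w)|$; differentiating $\varphi_z\circ\varphi_z=\mathrm{id}$ also gives $\varphi_z'(\varphi_z(w))\,\varphi_z'(w)=1$. Since $1-\overline{z}w$ has positive real part on $\mathbb{D}$, the derivative $\varphi_z'$ is zero-free on the simply connected domain $\mathbb{D}$, so $(\varphi_z')^{(2+\alpha)/p}$ is a well-defined analytic function and I may set $g=(f\circ\varphi_z)\cdot(\varphi_z')^{(2+\alpha)/p}$, which is analytic on $\mathbb{D}$. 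The next step is to verify, via the change of variables $w\mapsto\varphi_z(w)$ (whose Jacobian is $|\varphi_z'|^2$) together with the identities above, that $\|g\|_{p,\alpha}=\|f\|_{p,\alpha}$; the only point needing care is the exponent bookkeeping, where the factor $|\varphi_z'|^{2+\alpha}$ coming from $|g|^p$, the factor $|\varphi_z'|^{\alpha}$ coming from the weight $(1-|w|^2)^\alpha$, and the factor $|\varphi_z'|^2$ coming from the Jacobian must cancel exactly. In particular $g\in L_a^{p,\alpha}$.

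Second I would prove the special case $|h(0)|\le\|h\|_{p,\alpha}$ for every $h\in L_a^{p,\alpha}$. Since $h$ is analytic and $0<p<\infty$, the function $|h|^p$ is subharmonic, so $|h(0)|^p\le\frac{1}{2\pi}\int_0^{2\pi}|h(re^{i\theta})|^p\,d\theta$ for each $0<r<1$; multiplying by $(\alpha+1)(1-r^2)^\alpha\,2r$, integrating in $r$ over $(0,1)$, and using $\int_0^1(\alpha+1)(1-r^2)^\alpha\,2r\,dr=1$ yields $|h(0)|^p\le\int_{\mathbb{D}}|h|^p\,dA_\alpha=\|h\|_{p,\alpha}^p$.

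Finally I would combine the two steps: $g(0)=f(z)\,(\varphi_z'(0))^{(2+\alpha)/p}$ has modulus $|f(z)|\,(1-|z|^2)^{(2+\alpha)/p}$, and applying the second step to $h=g$ gives $|f(z)|\,(1-|z|^2)^{(2+\alpha)/p}=|g(0)|\le\|g\|_{p,\alpha}=\|f\|_{p,\alpha}$, which rearranges to the claimed inequality. The main obstacle — really the only technical one — is the isometry identity $\|g\|_{p,\alpha}=\|f\|_{p,\alpha}$, including fixing a consistent branch of $(\varphi_z')^{(2+\alpha)/p}$; once that is in hand the rest is a one-line computation.
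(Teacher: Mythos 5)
Your proof is correct: the change-of-variables bookkeeping (the factors $|\varphi_z'|^{2+\alpha}$, $|\varphi_z'|^{\alpha}$ and the Jacobian $|\varphi_z'|^{2}$ do cancel), the subharmonic mean-value estimate at the origin with $\int_0^1(\alpha+1)(1-r^2)^\alpha 2r\,dr=1$, and the evaluation $|g(0)|=|f(z)|(1-|z|^2)^{(2+\alpha)/p}$ all check out. The paper itself gives no proof, citing Vukoti\'c, and your argument is essentially the standard one from that source (Möbius transport to the origin plus the sub-mean value property), so there is nothing to add.
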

\begin{lem}\cite{27}\label{3.5} Let $0<p <\infty$ and $\alpha>-1$. Then for each $z\in \mathbb{D}$ we have
$$\mathop {\sup }\limits \{ \left| {f(z)} \right|:f \in L_a^{p,\alpha },{\left\| f \right\|_{L_a^{p,\alpha }}}\leq 1\}= \frac{1}{{{{(1 - |z{|^2})}^{(2 + \alpha )/p}}}}.$$
\end{lem}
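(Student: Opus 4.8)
The plan is to prove the two inequalities separately; the estimate ``$\le$'' is already contained in Lemma \ref{3}. Indeed, if $f\in L_a^{p,\alpha}$ with $\|f\|_{L_a^{p,\alpha}}\le 1$, then Lemma \ref{3} gives $|f(z)|\le (1-|z|^2)^{-(2+\alpha)/p}$, and hence $\sup\{|f(z)|:f\in L_a^{p,\alpha},\ \|f\|_{L_a^{p,\alpha}}\le 1\}\le (1-|z|^2)^{-(2+\alpha)/p}$. So the real content is the reverse inequality, and for that I would exhibit, for each fixed $z\in\mathbb{D}$, an explicit extremal function.

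The candidate is
$$f_z(w)=\left(\frac{1-|z|^2}{(1-\overline{z}w)^{2}}\right)^{(2+\alpha)/p},\qquad w\in\mathbb{D}.$$
Since $1-\overline{z}w$ is zero-free on $\mathbb{D}$ it has an analytic logarithm there, so $f_z$ is a well-defined analytic function with $|f_z(w)|^{p}=(1-|z|^2)^{2+\alpha}|1-\overline{z}w|^{-2(2+\alpha)}$; in particular $f_z(z)=(1-|z|^2)^{-(2+\alpha)/p}$. Consequently
$$\|f_z\|_{L_a^{p,\alpha}}^{p}=(\alpha+1)(1-|z|^2)^{2+\alpha}\int_{\mathbb{D}}\frac{(1-|w|^2)^{\alpha}}{|1-\overline{z}w|^{2(2+\alpha)}}\,dA(w),$$
and the key point is that the last integral can be evaluated \emph{exactly}, not just up to a constant: by the reproducing property of the Bergman kernel $K_\alpha(w,z)=(1-w\overline{z})^{-(2+\alpha)}$ on the Hilbert space $L_a^{2,\alpha}$ one has
$$(\alpha+1)\int_{\mathbb{D}}\frac{(1-|w|^2)^{\alpha}}{|1-\overline{z}w|^{2(2+\alpha)}}\,dA(w)=\|K_\alpha(\cdot,z)\|_{L_a^{2,\alpha}}^{2}=K_\alpha(z,z)=(1-|z|^2)^{-(2+\alpha)}$$
(equivalently, expand $|1-\overline{z}w|^{-2(2+\alpha)}$ in a double power series and integrate term by term using orthogonality of the monomials in $L_a^{2,\alpha}$). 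Substituting this back yields $\|f_z\|_{L_a^{p,\alpha}}=1$, so $f_z$ is admissible and realizes the value $(1-|z|^2)^{-(2+\alpha)/p}$; together with the first step this gives the asserted equality.

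An alternative, more structural route to the lower bound goes through the disc automorphism $\varphi_z(w)=\dfrac{z-w}{1-\overline{z}w}$, which interchanges $0$ and $z$: using the identity $1-|\varphi_z(w)|^{2}=\dfrac{(1-|z|^2)(1-|w|^2)}{|1-\overline{z}w|^{2}}$ together with the change-of-variables formula, one checks that $f\mapsto (f\circ\varphi_z)\cdot(\varphi_z')^{(2+\alpha)/p}$ is a surjective (indeed involutive) isometry of $L_a^{p,\alpha}$, which reduces the statement to the case $z=0$. There the supremum equals $1$, since the constant function equal to $1$ satisfies $\|1\|_{L_a^{p,\alpha}}=1$ (the measure $dA_\alpha$ is a probability measure on $\mathbb{D}$) while its value at $0$ is $1$. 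In either approach, the only genuine obstacle is pinning down the \emph{exact} value of the weighted kernel integral (respectively, the exact Jacobian of $\varphi_z$) rather than its well-known order of magnitude $(1-|z|^2)^{-(2+\alpha)}$; once that identity is recorded, the rest is a routine substitution.
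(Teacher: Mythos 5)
Your proof is correct. The paper itself gives no argument for this lemma --- it is quoted from Ueki \cite{27} --- so there is nothing to compare line by line; but your route is the standard one and is fully consistent with the ingredients the paper does record: the upper bound is exactly Lemma \ref{3}, and your extremal function $f_z$ is precisely the normalized kernel $f_z^{p,\alpha}$ of Remark \ref{4}, whose unit norm the paper asserts there without computation. Your contribution is to actually verify that norm identity, and the verification is sound: the exact evaluation
$$(\alpha+1)\int_{\mathbb{D}}\frac{(1-|w|^{2})^{\alpha}}{|1-\overline{z}w|^{2(2+\alpha)}}\,dA(w)=K_\alpha(z,z)=(1-|z|^{2})^{-(2+\alpha)}$$
is correct for every $\alpha>-1$ (either by the reproducing property of $K_\alpha$ in $L_a^{2,\alpha}$ or by the term-by-term binomial/Beta-function computation you indicate), which gives $\|f_z\|_{L_a^{p,\alpha}}=1$ and $|f_z(z)|=(1-|z|^{2})^{-(2+\alpha)/p}$, hence the supremum is attained and equality holds. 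Your alternative reduction to $z=0$ via the weighted composition isometry is also fine, with the minor caveat that the factor $(\varphi_z')^{(2+\alpha)/p}$ needs a branch choice; it is cleaner to use $f_z^{p,\alpha}$ itself as the automorphic factor, which is exactly what the paper's Remark \ref{4} does, and note that the upper bound at $z=0$ still comes from Lemma \ref{3} (or subharmonicity of $|f|^p$ against the radial probability measure $dA_\alpha$), not from the constant-function example alone.
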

 Let $a \in\mathbb{D}$ and $$\varphi_{a}(z)= \frac{a-z}{1- \overline{a} z}, \quad z \in \mathbb{D}.$$
 The analytic transformation $\varphi_{a}$ is called the involutive Mobius transformation of $\mathbb{D}$, which interchanges the origin and $z$. It is clear  for all $z\in \mathbb{D}$,${\varphi _a}({\varphi _a}(z)) = z$ or ${\varphi _a}^{ - 1}={\varphi _a}$ and
$${{\varphi'}_a}(z)=-\frac{{1-{{\left|a \right|}^2}}}{{{{(1-\bar az)}^2}}}.$$ \\
The metric defined on $\mathbb{D}$ by
 $$\rho(a, z)=|\varphi_a(z)|= \big|\frac{a-z}{1- \overline a z} \big|, \quad \quad a,z \in \mathbb{D},$$
 is called pseudo-hyperbolic distance. Maybe one of important properties of the pseudo-hyperbolic distance is
that it is Mobius invariant, that is $$\rho(w, z)=\rho(\varphi_a(w),\varphi_a(z)).$$
In addition the metric given by
$$\beta(a,z) = \frac{1}{2} log \frac{1+ \rho(a,z)}{1- \rho(a, z)}, \quad \quad  a, z \in \mathbb{D},$$
is called Bergman metric on $\mathbb{D}$( or some where is called the hyperbolic metric or the Poincar\'e metric on $\mathbb{D}$).
Hence we have $$\beta(0,z) = \frac{1}{2} log \frac{1+ {\left| z \right|}}{1- {\left| z \right|}}, \quad \quad   z \in \mathbb{D}.$$

The Bergman metric is also Mobius invariant. This means that
$$\beta(w, z)=\beta(\varphi_a(w),\varphi_a(z)).$$
for all $\varphi \in Aut(\mathbb{D})$ and all $z,w \in \mathbb{D}.$

For any $a \in \mathbb{D}$ and $r > 0$, let
$$D(a,r) = \lbrace z \in \mathbb{D}:~ \beta(a, z) < r\rbrace$$
be the Bergman metric disk with "center" $a$  and "radius" $r$. It follows from the expression for $\beta(a,z)$  that $D(a,r)$ is a Euclidean disk with Euclidean center and radius
$$C= \frac{1-s^{2}}{1- s^{2}|a|^{2} }a, \quad \quad R= \frac{1-|a|^{2}}{1- s^{2} |a|^{2}}s,$$
where  $s=\tanh r$.We have $D(a,r)=\varphi_a(D(0,r))$.
As is known for each $z \in \mathbb{D}$, the Riesz representation theorem implies that there exists a unique function $K_z$ in $L_a^2(\mathbb{D})$ such that for all $f\in L_a^2(\mathbb{D})$
\[f(z) = \int_{\mathbb{D}} {f(w)\overline {{K_z}(w)} dA(w)}.\]  Let $K(z,a)$ be the function on $\mathbb{D}\times \mathbb{D}$ defined by $K(z,a)=\overline {{K_z}(a)}$. $K(z,a)$ is called the Bergman kernel of $\mathbb{D}$ or the reproducing kernel of $L_a^2(\mathbb{D})$ because of the formula
$$f(z) = \int_{\mathbb{D}} f(w) K(z,w) dA(w)$$
reproduces each $f\in L_a^2(\mathbb{D})$. And it is known that $K(z,w)=\frac{1}{{{{(1 - z\overline a )}^2}}}$. Moreover the functions  $$ K_\alpha(\omega,z)= \frac{1}{(1- w \overline z)^{2+\alpha}}\qquad z,\omega\in\mathbb{D}$$
are called (weighted) Bergman kernels of $\mathbb{D}$. If we set
\[{k_a}(z) = \frac{{K(z,a)}}{{\sqrt {K(a,a)} }} = \frac{{1 - {{\left| a \right|}^2}}}{{{{(1 - \bar az)}^2}}},\]
then $k_a(z)\in L_a^2(\mathbb{D})$ and they are called normalized reproducing kernels of $L_2(\mathbb{D})$. Easily we get that the derivative of $\varphi_a$ at $z$ is equal to $—k_a(z)$. This implies that
 $$\left|{{{\varphi'}_a}(z)}\right|=\left|{k_a(z)}\right|$$
for all $a,z\in\mathbb{D}$.

\begin{rem}\label{4} Let $0<p <\infty$, $\alpha >-1$, and $a\in\mathbb{D}$. The  normalized Bergman kernel function for $L_{a}^{p,\alpha}$ is defined by
$$({k_{a}(z)})^{\frac{\alpha+2}{p}}=f_{a}^{p,\alpha}(z)=\frac{\big(1-|a|^{2}\big)^{(2+ \alpha) /p}}{( 1- \overline{a} z )^{2(2+\alpha)/p}}.$$
 Direct computations shows that
 $$ \left\| {f_a^{p,\alpha}} \right\|_{L_a^{p,\alpha }}^p={(1 - {\left| a \right|^2})^{2 + \alpha }}\left\| {{{(1 - \bar az)}^{ - 2 - \alpha }}} \right\|_{L_a^{\alpha ,2}}^2.$$
 Therefore for all $\alpha > −1$ and $ p > 0$ we have
$||f_{a}^{p,\alpha}||_{L_{a}^{p,\alpha}}^{p}=1$.
Let $f\in L_a^{p,\alpha}$ and $a\in \mathbb{D}$. If we set
$$F(z)=f(\varphi_a(z))f_{a}^{p,\alpha}(z)\qquad z\in \mathbb{D},$$
then  ${\left\| F \right\|_{L_a^{p,\alpha }}}={\left\| f \right\|_{L_a^{p,\alpha }}}$.
\end{rem}
Here we recall some results of \cite{11} that are basic and useful for in the sequel.
\begin{lem}\label{4.5}\cite{11} For each $r > 0$ there exists a positive constant $C_r$ such that
$$C_r^{ - 1} \le \frac{{1 - {{\left| a \right|}^2}}}{{1 - {{\left| z \right|}^2}}} \le {C_r}$$
and
$$C_r^{ - 1} \le \frac{{1 - {{\left| a \right|}^2}}}{{\left| {1 - \bar az} \right|}} \le {C_r},$$
for all a and z in $\mathbb{D}$ with $\beta(a, z) < r$. Moreover, if $r$ is bounded above, then we may
choose $C_r$ to be independent of $r$.
\end{lem}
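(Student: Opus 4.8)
The plan is to deduce the whole lemma from the single M\"obius identity
$$1 - |\varphi_a(z)|^2 = \frac{(1-|a|^2)(1-|z|^2)}{|1-\bar az|^2},$$
which is checked by a direct algebraic computation from $\varphi_a(z) = \frac{a-z}{1-\bar az}$. By the definition of the Bergman metric, the hypothesis $\beta(a,z)<r$ is equivalent to $\rho(a,z) = |\varphi_a(z)| < s$ with $s = \tanh r \in (0,1)$, so that $1-s^2 < 1-|\varphi_a(z)|^2 \le 1$, and the identity becomes the two-sided estimate
$$(1-s^2)\,|1-\bar az|^2 < (1-|a|^2)(1-|z|^2) \le |1-\bar az|^2.$$

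First I would record two elementary one-sided inequalities, valid for all $a,z \in \mathbb{D}$ with no restriction on $\beta(a,z)$: namely $|1-\bar az| \ge 1-|a||z| \ge 1-|a| = \frac{1-|a|^2}{1+|a|} \ge \frac12(1-|a|^2)$, and $1-|z|^2 = (1-|z|)(1+|z|) \le 2(1-|z|) \le 2|1-\bar az|$. Substituting the second of these into the left half of the displayed two-sided estimate eliminates the factor $1-|z|^2$ and yields $|1-\bar az| < \frac{2}{1-s^2}(1-|a|^2)$; together with the lower bound $|1-\bar az| \ge \frac12(1-|a|^2)$ this already gives the second inequality of the lemma, with $C_r = \frac{2}{1-s^2}$.

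For the first inequality I would use the factorisation
$$\frac{1-|a|^2}{1-|z|^2} = \left(\frac{1-|a|^2}{|1-\bar az|}\right)^{2} \cdot \frac{1}{1-|\varphi_a(z)|^2},$$
estimating the first factor by the second inequality just obtained and the second by $1 \le \frac{1}{1-|\varphi_a(z)|^2} < \frac{1}{1-s^2}$; the reverse bound for $\frac{1-|a|^2}{1-|z|^2}$ follows either from the same factorisation read in the opposite direction or, more symmetrically, by interchanging the roles of $a$ and $z$, since $\rho(a,z)=\rho(z,a)$ and $|1-\bar az|=|1-\bar za|$ make the hypothesis symmetric. Taking $C_r$ to be the largest of the constants appearing in the two displays --- one checks that $C_r = 4\cosh^{4} r = \frac{4}{(1-\tanh^2 r)^2}$ serves throughout --- completes the estimates. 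The last assertion is then immediate: if $r \le R$, then $\beta(a,z)<r$ implies $\beta(a,z)<R$, so the constant $4\cosh^{4}R$, which does not involve $r$, works uniformly for all such $r$.

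I do not expect a genuine obstacle; the only mildly non-obvious step is noticing that the ``extra'' factor $1-|z|^2$ on the right-hand side of the M\"obius identity can be absorbed through the trivial bound $1-|z|^2 \le 2|1-\bar az|$, after which the argument is routine manipulation and the choice of constants is pure bookkeeping.
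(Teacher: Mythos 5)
Your argument is correct: the identity $1-|\varphi_a(z)|^2=\frac{(1-|a|^2)(1-|z|^2)}{|1-\bar az|^2}$, the translation of $\beta(a,z)<r$ into $|\varphi_a(z)|<\tanh r$, the elementary bounds $\frac12(1-|a|^2)\le|1-\bar az|$ and $1-|z|^2\le 2|1-\bar az|$, and the bookkeeping with $C_r=4\cosh^4 r$ all check out, as does the final remark about $r$ bounded above. Note that the paper itself offers no proof of this lemma (it is quoted from Zhu's book, reference \cite{11}), and your derivation is essentially the standard argument given there, so there is nothing to reconcile.
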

By some elementary calculations we have
$$\frac{(1-|a|^{2})^{2}}{|1- \overline{a}z|^{4}} \approx \frac{1}{(1-|z|^{2})^{2}} \approx \frac{1}{(1-|a|^{2})^{2}} \approx \frac{1}{|D(a,r)|},$$
for $\beta(a,z) \leq R$ and
$$|{D} (z,r)|_{A} \approx (1-|z|^{2})^{2} \approx (1- |\omega|^{2})^{2} \approx|{D} (\omega,s)|_{A}$$
for $\beta(z,\omega) \leq R$.\\
Note that for all $z, w\in \mathbb{D}$, we have \[\frac{1}{{1 - \left| w \right|}}\leq\frac{1}{{\left| {1 - z\overline w } \right|}} \le \frac{1}{{1 - \left| z \right|}} = \frac{{1 + \left| z \right|}}{{1 - {{\left| z \right|}^2}}} \le \frac{2}{{1 - {{\left| z \right|}^2}}}.\]
 The notation $A \approx B$ means that there is a positive constant $C$ independent of $A$ and $B$ such that $C^{-1}B \leq A \leq CB$.\\

Now by using the results of \cite{11} we obtain that there is a cover of disjoint balls in $\mathcal{A}$ for $D$.
\begin{lem}\label{8} There is a positive integer $N$ such that for any $r \leq 1$, there exists a sequence $\{ a_{n}\}$ in $\mathbb{D}$ such that $ D(a_{n}, r)\in \mathcal{A}$ and satisfying the following conditions:\\
(1)$\mathbb{D}= \bigcup_{n=1}^{+\infty} D(a_{n}, r)$\\
(2) $D(a_{n}, \frac{r}{4}) \cap D(a_{m},\frac{r}{4})= \emptyset$ if $n \neq m$;\\
$(3)$Any point in $\mathbb{D}$ belongs to at most $N$ of the sets $D(a_{n}, 2r)$.
\end{lem}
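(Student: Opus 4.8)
The plan is to reproduce, in the present setting, the classical Bergman‑metric covering construction of \cite{11}: one takes a \emph{maximal} $\frac r2$‑separated set of points in the metric space $(\mathbb{D},\beta)$, and then reads off (1)--(3) from maximality, the triangle inequality, and the Euclidean‑area estimates for Bergman disks recorded just before Lemma~\ref{8}. The only genuinely new point compared with \cite{11} is that the disks must be chosen inside the sub‑sigma‑algebra $\mathcal{A}$, and that is where I expect the work to lie.

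First I would fix $r\le 1$ and apply Zorn's lemma to the family of all subsets $S\subseteq\mathbb{D}$ with $\beta(a,b)\ge\frac r2$ for all distinct $a,b\in S$, ordered by inclusion (the union of a chain of such sets is again $\frac r2$‑separated), obtaining a maximal one, $\{a_n\}$. It is automatically countable: the disks $D(a_n,\frac r4)$ are pairwise disjoint, since $z\in D(a_n,\frac r4)\cap D(a_m,\frac r4)$ would force $\beta(a_n,a_m)<\frac r4+\frac r4=\frac r2$; and each such disk has positive area while $\mathbb{D}$ has finite area. This gives (2) at once. For (1): given $z\in\mathbb{D}$, maximality forbids $\{a_n\}\cup\{z\}$ from being $\frac r2$‑separated, so $\beta(a_n,z)<\frac r2<r$ for some $n$, i.e. $z\in D(a_n,\frac r2)\subseteq D(a_n,r)$.

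For (3) I would argue by an area comparison. Suppose $z\in D(a_{n_1},2r),\dots,D(a_{n_k},2r)$ with the $n_i$ distinct. By the triangle inequality each $D(a_{n_i},\frac r4)\subseteq D(z,\frac{9r}{4})$, and these $k$ disks are pairwise disjoint by (2), so $\sum_{i=1}^k |D(a_{n_i},\frac r4)|\le |D(z,\frac{9r}{4})|$. Using the explicit Euclidean radius of a Bergman disk together with Lemma~\ref{4.5} (whose constants may be taken uniform because here all the relevant radii, $\frac r4$, $2r$, $\frac{9r}{4}$, are bounded), one has $|D(a_{n_i},\frac r4)|\approx (\tanh\frac r4)^2(1-|a_{n_i}|^2)^2\approx (\tanh\frac r4)^2(1-|z|^2)^2$ and $|D(z,\frac{9r}{4})|\approx (\tanh\frac{9r}{4})^2(1-|z|^2)^2$, all with absolute comparison constants. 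Hence $k\le C\,(\tanh\frac{9r}{4})^2/(\tanh\frac r4)^2$, and the function $r\mapsto (\tanh\frac{9r}{4})^2/(\tanh\frac r4)^2$ is continuous on $(0,1]$ with finite limit $81$ as $r\to0^+$, so it is bounded there; this yields an integer $N$ independent of $r$ (and of $z$), proving (3).

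The main obstacle is arranging $D(a_n,r)\in\mathcal{A}$. Since $D(a,r)=\varphi_a(D(0,r))$ is merely a Euclidean disk, an arbitrary maximal separated set will not have this property, so I would instead perform the maximal‑$\frac r2$‑separated selection \emph{inside} the set $\mathcal{D}_r:=\{a\in\mathbb{D}:D(a,r)\in\mathcal{A}\}$. The argument for (1)--(3) then goes through verbatim provided $\mathcal{D}_r$ is $\frac r2$‑dense in $(\mathbb{D},\beta)$ (so that the covering $\bigcup D(a_n,r)$ still exhausts $\mathbb{D}$ after absorbing the extra $\frac r2$). Establishing this density is exactly the place where the structural hypotheses on $\mathcal{A}$ must be used, and I expect the bulk of a careful proof to be this measurability bookkeeping rather than the (routine) geometry above; if $\mathcal{A}$ is not rich enough to make $\mathcal{D}_r$ suitably dense, the statement would have to be weakened to allow the covering disks to have slightly larger radius or to be only $\mathcal{A}$‑measurable sets containing a Bergman disk and contained in a comparable one.
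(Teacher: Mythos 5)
Your geometric construction --- a maximal $\frac r2$-separated set, disjointness of the $\frac r4$-disks, and the area comparison via Lemma \ref{3.6} and Lemma \ref{4.5} yielding an $N$ independent of $r\le 1$ --- is correct, and it is precisely the standard covering lemma of \cite{11} that the paper relies on. In fact the paper offers no proof of Lemma \ref{8} at all: it is stated with only the preceding remark that it is obtained ``by using the results of \cite{11}''. So on the classical assertions (1)--(3) you and the paper are taking the same route, except that you actually write the argument out.

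The point you flag --- securing $D(a_n,r)\in\mathcal{A}$ --- is a genuine gap, but it is a gap in the paper rather than a defect of your proposal relative to it: the paper nowhere argues that Bergman disks belonging to $\mathcal{A}$ exist in sufficient supply, and the results of \cite{11} say nothing about a sub-sigma-algebra. Your suggested repair (perform the maximal selection inside $\mathcal{D}_r=\{a\in\mathbb{D}: D(a,r)\in\mathcal{A}\}$ and prove that $\mathcal{D}_r$ is $\frac r2$-dense in the Bergman metric) is the natural one, but the needed density can fail for the very sigma-algebras the paper works with: for $\mathcal{A}=\mathcal{A}(\varphi)$ with $\varphi(z)=z^2$, every set in $\mathcal{A}$ is invariant under $z\mapsto -z$ (even modulo null sets), while a Bergman disk $D(a,r)$ is a Euclidean disk whose center vanishes only when $a=0$; hence only disks centered at the origin lie in $\mathcal{A}$, and no sequence satisfying (1) and (2) simultaneously can exist. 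So the lemma as stated needs additional hypotheses on $\mathcal{A}$, or a weakening of the kind you describe (covering by $\mathcal{A}$-measurable sets squeezed between comparable Bergman disks); your caution is warranted, and a correct proof cannot consist of the citation alone.
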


Here we recall a theorem concerning of the form of the functions in the range of conditional expectation operators.
\begin{thm}\cite{jh}\label{12} Suppose that $\mathcal{A}= \mathcal{A}(\varphi)$ for some $\varphi \in A(\mathbb{D})$ with finite multiplicity. Suppose that none of the $\zeta_{j}(w)$ belongs to $\{z:\varphi '(z) = 0\} $ and that $w \notin f(\mathbb{T})$. Then for every $f$ in $L_{a}^{p}(\mathbb{D})$ and $\zeta$ in $\varphi^{-1}(w)$,
\begin{flushleft}
$$E_{\mathcal{A}}(f)(\zeta)=\frac{\sum_{\zeta_j \epsilon c_z} \frac{f(\zeta_{j})}{|{\varphi }'({{\xi }_{j}})|^{2}}}{\sum_{\zeta_j \epsilon c_z}\frac{1}{|{\varphi }'({{\xi }_{j}})|^{2}}}.$$
\end{flushleft}
Also, the function $\omega$ defined as
$$\omega(\zeta)= \frac{\frac{1}{|{\varphi }'({{\zeta }_{j}})|^{2}}}{\sum_{\zeta_j \epsilon c_z}\frac{1}{|{\varphi }'({{\zeta }_{j}})|^{2}}}$$
is constant on each level set. In particular if $E_\mathcal{A}P_\alpha=P_\alpha E_\mathcal{A}$ , then $\omega$ is constant on $\mathbb{D}$.
\end{thm}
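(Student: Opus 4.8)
The plan is to recover the formula from the two defining properties of the conditional expectation: $E_{\mathcal{A}}(f)$ is $\mathcal{A}(\varphi)$-measurable, and $\int_{\Delta}E_{\mathcal{A}}(f)\,dA=\int_{\Delta}f\,dA$ for every $\Delta\in\mathcal{A}(\varphi)$. First I would use the standard fact (the Doob--Dynkin lemma, legitimate since $\mathcal{A}(\varphi)=\sigma(\varphi)$) that an $\mathcal{A}(\varphi)$-measurable function agrees a.e.\ with a Borel function of $\varphi$, so that $E_{\mathcal{A}}(f)=g\circ\varphi$ for some Borel $g$ on $\mathbb{C}$. Since every set in $\mathcal{A}(\varphi)$ has the form $\varphi^{-1}(U)$, identifying $E_{\mathcal{A}}(f)$ reduces to pinning down $g$ from the identities $\int_{\varphi^{-1}(U)}(g\circ\varphi)\,dA=\int_{\varphi^{-1}(U)}f\,dA$, and it suffices to test these on $U$'s whose closure misses the cluster set $\varphi(\mathbb{T})$, so that $\varphi^{-1}(U)$ is a compact subset of $\mathbb{D}$ on which $f$ is bounded.

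The heart of the matter is the change-of-variables (area) formula for the finitely-multiple holomorphic map $\varphi$. Because $\varphi$ has finite multiplicity and $w\notin\varphi(\mathbb{T})$, the level set $c_{z}=\varphi^{-1}(w)=\{\zeta_{1}(w),\dots,\zeta_{k}(w)\}$ is a finite subset of $\mathbb{D}$, and the hypothesis that no $\zeta_{j}(w)$ is a critical point of $\varphi$ makes $\varphi$ a local biholomorphism at each $\zeta_{j}(w)$, with real Jacobian there equal to $|\varphi'(\zeta_{j})|^{2}$. Since the critical values of $\varphi$ form a set of area zero, for the admissible $w$ and for any $\psi$ integrable against $dA$ one gets
\[
\int_{\varphi^{-1}(U)}\psi\,dA=\int_{U}\Big(\sum_{\zeta_{j}\in c_{z}}\frac{\psi(\zeta_{j})}{|\varphi'(\zeta_{j})|^{2}}\Big)\,dm(w),
\]
with $m$ area measure on $\mathbb{C}$ (the fixed constant relating $dA$ and $dm$ cancels below). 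Applying this once with $\psi=f$ and once with $\psi=g\circ\varphi$ — which takes the single value $g(w)$ on the fibre over $w$ — turns the two equal integrals into $\int_{U}\sum_{j}f(\zeta_{j})\,|\varphi'(\zeta_{j})|^{-2}\,dm$ and $\int_{U}g(w)\sum_{j}|\varphi'(\zeta_{j})|^{-2}\,dm$.

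Equating these for all admissible $U$ forces, for a.e.\ such $w$,
\[
g(w)\sum_{\zeta_{j}\in c_{z}}\frac{1}{|\varphi'(\zeta_{j})|^{2}}=\sum_{\zeta_{j}\in c_{z}}\frac{f(\zeta_{j})}{|\varphi'(\zeta_{j})|^{2}},
\]
and, the denominator being strictly positive, solving for $g$ and putting $w=\varphi(\zeta)$ gives the stated expression for $E_{\mathcal{A}}(f)(\zeta)$. The numbers $\omega(\zeta_{j})=|\varphi'(\zeta_{j})|^{-2}\big/\sum_{i}|\varphi'(\zeta_{i})|^{-2}$ are exactly the coefficients of this weighted average, and since both the fibre $c_{z}$ and the normalising sum $\sum_{i}|\varphi'(\zeta_{i})|^{-2}$ — which is a constant multiple of $(h\circ\varphi)(\zeta)$, where $h=\frac{dA\circ\varphi^{-1}}{dm}$ — are determined by $w=\varphi(\zeta)$ alone, this weight datum does not change as $\zeta$ runs over a level set. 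Finally, if $E_{\mathcal{A}}P_{\alpha}=P_{\alpha}E_{\mathcal{A}}$ then, as recorded in the Introduction, $E_{\mathcal{A}}$ maps $L_{a}^{p}(\mathbb{D})$ into itself; feeding the analytic test functions $1,z,z^{2},\dots$ (equivalently the normalised reproducing kernels of Remark \ref{4}) into the formula just obtained and imposing that each output $g\circ\varphi$ again be analytic forces the non-analytic factor $\sum_{i}|\varphi'(\zeta_{i})|^{-2}$ — equivalently $h\circ\varphi$ — to be constant, i.e.\ $\omega$ is constant on $\mathbb{D}$; this is the Bergman-space analogue of Aleksandrov's theorem, and I would carry this last computation out along the lines of \cite{jh}, together with \cite{hjj} and \cite{cs}.

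The step I expect to be the real obstacle is the clean justification of the area formula around the critical set of $\varphi$ and the boundary cluster set $\varphi(\mathbb{T})$ — checking that the exceptional $w$ are negligible and that all the finite sums and integrals are legitimate for $f\in L_{a}^{p}(\mathbb{D})$, which is precisely where the multiplicity analysis behind \cite{hjj} and \cite{cs} is needed — together with the rigidity step in the last assertion, where one must show that an $\mathcal{A}(\varphi)$-measurable analytic function must be $g\circ\varphi$ with $g$ analytic and then extract from this the constancy of the density.
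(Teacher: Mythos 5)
You should first note a mismatch of context: the paper does not prove Theorem \ref{12} at all --- it is recalled from \cite{jh} (with some typographical damage: $\zeta_j$ versus $\xi_j$, $f(\mathbb{T})$ for $\varphi(\mathbb{T})$), so there is no in-paper proof to compare yours against. Judged on its own, your derivation of the displayed formula is the standard route taken in \cite{hjj}, \cite{cs} and \cite{jh}: write $E_{\mathcal{A}(\varphi)}(f)=g\circ\varphi$ by Doob--Dynkin, expand $\int_{\varphi^{-1}(U)}\psi\,dA$ over the finitely many non-critical branches of $\varphi^{-1}$ with real Jacobian $|\varphi'|^{2}$, equate the two densities for all admissible $U$, and solve for $g$; the reduction to sets $U$ avoiding the critical values and the cluster set (both negligible) is the right way to make this legitimate for $f\in L_a^p(\mathbb{D})$. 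That part of the sketch is sound and is essentially the argument of the cited sources.

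The genuine gaps are in the two remaining assertions. For the claim that $\omega$ is constant on each level set, your justification only shows that the fibre $c_z$, the denominator $\sum_i|\varphi'(\zeta_i)|^{-2}$ (equivalently $h\circ\varphi$, up to a constant) and hence the \emph{multiset} of weights are determined by $w=\varphi(\zeta)$; but the value $\omega(\zeta)$ also carries the factor $|\varphi'(\zeta)|^{-2}$ evaluated at the particular point $\zeta$, which in general varies as $\zeta$ runs over the fibre. So constancy of the function $\omega$ on a level set does not follow from fibre-determinacy of the weight data, and as literally stated it would force $|\varphi'|$ to be constant on each fibre --- something your argument never addresses (the statement as transcribed is garbled, and the measurability/constancy assertion in \cite{jh} and \cite{cs} concerns the $\mathcal{A}$-measurable density, i.e.\ the denominator, or holds under the commutation hypothesis). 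Second, the final implication ($E_{\mathcal{A}}P_\alpha=P_\alpha E_{\mathcal{A}}$ forces $\omega$ constant on $\mathbb{D}$) is the Aleksandrov-type rigidity step and is the only nontrivial content beyond the change of variables; you only gesture at it (``feed in the kernels and impose analyticity'') and explicitly defer to \cite{jh}. Deferring is consistent with how the paper itself treats the theorem, but it means your proposal, taken as a proof, establishes the displayed formula and not the two statements about $\omega$; to close it you would need to carry out the kernel computation (analyticity of $E_{\mathcal{A}}K_a(\cdot)$ for all $a$ forcing the density $h\circ\varphi$ to be constant) rather than cite it.
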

Every non-constant analytic self-map $\varphi$ on $\mathbb{D}$ has finite multiplicity, since if there is $w\in \varphi(\mathbb{D})$ such that $\varphi^{-1}(\{w\})$ is infinite, then $\varphi(z)=w$ for all $z\in \mathbb{D}$. Therefore the Theorem \ref{12} holds for all non-constant analytic self-map $\varphi$ on $\mathbb{D}$.
Now we recall some assertions that we will use them in the sequel.
\begin{lem}\cite{14}\label{15.5}
There is a constant $C>0$ such that
$$|f(z)|^{p} \leq \frac{C}{{(1-|z|^2)}^{\alpha+2}} \int_{D(z,r)} |f(\omega)|^{p} dA_\alpha(\omega),$$
for all $f$ analytic, $z \in \mathbb{D}, p>0$, and $r \leq 1$.
We especially note that the constant $C$ above is independent of $r$ and
$p$. The restriction $r < 1$ above can be replaced by $r < R$ for any positive
number $R.$
\end{lem}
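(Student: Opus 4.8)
The plan is to derive the estimate from the sub-mean-value property of $|f|^{p}$ over a Euclidean disk sitting inside $D(z,r)$, and then to replace $dA$ by $dA_{\alpha}$ on that disk. The key fact is that for analytic $f$ and any $p>0$ the function $|f|^{p}$ is subharmonic on $\mathbb{D}$, so for every Euclidean disk $\Delta=\{w:|w-z|<\delta\}$ with $\overline{\Delta}\subset\mathbb{D}$ one has
$$|f(z)|^{p}\le\frac{1}{|\Delta|}\int_{\Delta}|f(w)|^{p}\,dA(w),$$
where $|\Delta|=\delta^{2}$ in normalized area. Nothing about $p$ enters this constant, and this is exactly where the asserted independence of the final constant from $p$ comes from: one never reduces to the case $p=2$ and bootstraps.

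Next I would supply the geometric input: $D(z,r)$ contains a Euclidean disk $\Delta$ centred at $z$ of radius comparable to $1-|z|^{2}$. Indeed, since $|1-\bar z w|\ge\tfrac12(1-|z|^{2})$ (noted above), $\rho(z,w)=|z-w|/|1-\bar z w|\le 2|z-w|/(1-|z|^{2})$, so the Euclidean disk $\Delta$ of radius $\tfrac12(\tanh r)(1-|z|^{2})$ about $z$ lies in $D(z,r)$; the comparability constants depend only on an upper bound for $r$ (alternatively, this inclusion is read off from Lemma \ref{4.5}). Hence $|\Delta|\approx(1-|z|^{2})^{2}$, and since $\Delta\subset D(z,r)$ and $|f|^{p}\ge 0$, the sub-mean-value inequality above gives
$$|f(z)|^{p}\le\frac{1}{|\Delta|}\int_{\Delta}|f(w)|^{p}\,dA(w)\le\frac{C}{(1-|z|^{2})^{2}}\int_{D(z,r)}|f(w)|^{p}\,dA(w).$$
(Equivalently one may transport the one-variable inequality $|g(0)|^{p}\le(\tanh r)^{-2}\int_{|\zeta|<\tanh r}|g(\zeta)|^{p}\,dA(\zeta)$ to $z$ via the change of variables $w=\varphi_{z}(\zeta)$, using $D(z,r)=\varphi_{z}(D(0,r))$, $D(0,r)=\{|\zeta|<\tanh r\}$ and $|\varphi_{z}'(w)|=|k_{z}(w)|$, with Lemma \ref{4.5} controlling $|\varphi_{z}'(w)|^{2}=(1-|z|^{2})^{2}/|1-\bar z w|^{4}$ on $D(z,r)$; the two routes give the same bound.)

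Finally I would pass to the weighted measure. For $w\in D(z,r)$ one has $1-|w|^{2}\approx 1-|z|^{2}$ by Lemma \ref{4.5}, hence $(1-|w|^{2})^{\alpha}\approx(1-|z|^{2})^{\alpha}$ on $D(z,r)$, with constants depending only on $\alpha$ and an upper bound for $r$; therefore
$$\int_{D(z,r)}|f(w)|^{p}\,dA(w)=\frac{1}{\alpha+1}\int_{D(z,r)}\frac{|f(w)|^{p}}{(1-|w|^{2})^{\alpha}}\,dA_{\alpha}(w)\approx\frac{1}{(1-|z|^{2})^{\alpha}}\int_{D(z,r)}|f(w)|^{p}\,dA_{\alpha}(w).$$
Inserting this into the previous display produces $|f(z)|^{p}\le C(1-|z|^{2})^{-(\alpha+2)}\int_{D(z,r)}|f(w)|^{p}\,dA_{\alpha}(w)$, which is the assertion. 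I expect the only genuinely delicate point to be the bookkeeping of constants: one must check that the inclusion $\Delta\subset D(z,r)$ and the weight comparison on $D(z,r)$ are quantified by constants that stay bounded when $r$ runs over a bounded set, which is precisely what the uniform form of Lemma \ref{4.5} provides; the same remark yields both the independence of the constant from $p$ and the fact that the hypothesis $r\le 1$ can be relaxed to $r\le R$.
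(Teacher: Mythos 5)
The paper offers no proof of this lemma at all --- it is quoted from Zhu \cite{14} --- so there is nothing to compare line by line; your argument is in fact the standard proof of that result: subharmonicity of $|f|^p$ for analytic $f$ and any $p>0$, the sub-mean-value inequality over a Euclidean disk inscribed in $D(z,r)$, and the comparability of $1-|w|^2$ and $|1-\bar z w|$ with $1-|z|^2$ on $D(z,r)$ supplied by Lemma \ref{4.5}. For each fixed $r$ this is complete and correct, and it does produce a constant independent of $p$, $f$ and $z$, which is all that is actually used later (Theorem \ref{18.5} works with a single fixed $r$).

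However, your bookkeeping of the $r$-dependence is wrong, and precisely at the point you yourself flag as delicate. The inscribed Euclidean disk has radius $\tfrac12\tanh(r)(1-|z|^2)$, hence normalized area $\tfrac14\tanh^2(r)(1-|z|^2)^2$, so the sub-mean-value step contributes the factor $4/\tanh^2 r$, which blows up as $r\to0^+$. Thus $|\Delta|\approx(1-|z|^2)^2$ holds with constants controlled by a \emph{lower} bound on $r$, not an upper bound as you assert (the upper bound on $r$ is what Lemma \ref{4.5} needs for the weight comparison $(1-|w|^2)^\alpha\approx(1-|z|^2)^\alpha$). Consequently your proof yields $C=C(r,\alpha)$, not a constant uniform over all $0<r\le1$, and this cannot be repaired as stated: with $C$ independent of $r$ the displayed inequality fails in the limit $r\to0$ (take $f\equiv1$, $z=0$; the right-hand side tends to $0$). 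The genuinely $r$-uniform version carries an extra factor $r^2$ (equivalently $\tanh^2 r$) on the right-hand side, as in Luecking \cite{3}. So either keep that factor explicit, or state that the constant depends on $r$ (equivalently, on a positive lower bound for $r$) while remaining independent of $p$; with that correction your argument is a valid proof of the form of the lemma the paper actually needs.
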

\begin{lem}\cite{14}\label{3.6} For any $ r>0$ and $a\in D$, we have the following equalities:
$$\left| {D(a,r)} \right| = \frac{{{{(1 - {{\left| a \right|}^2})}^2}{s^2}}}{{{{(1 - {{\left| a \right|}^2}{s^2})}^2}}},$$
$$\mathop {\inf }\limits_{z \in D(a,r)} {\left| {k_a(z)} \right|^2} = \frac{{{{(1 - s\left| a \right|)}^4}}}{{{{(1 - {{\left| a \right|}^2})}^2}}},$$
$$\mathop {\sup }\limits_{z \in D(a,r)}{\left| {k_a(z)} \right|^2} = \frac{{{{(1 + s\left| a \right|)}^4}}}{{{{(1 - {{\left| a \right|}^2})}^2}}},$$
where $s=tanh\, r\in(0,./762)$ for $r\in(0,1)$ and $\left| {D(a,r)} \right|$ is the (normalized) area of D(a,r).
\end{lem}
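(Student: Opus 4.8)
The plan is to reduce all three identities to the explicit Euclidean description of the Bergman disk already recorded in the excerpt, namely that $D(a,r)=\varphi_a(D(0,r))$ is the Euclidean disk with center $C=\frac{1-s^2}{1-s^2|a|^2}a$ and radius $R=\frac{(1-|a|^2)s}{1-s^2|a|^2}$, where $s=\tanh r$, together with the identity $k_a(z)=\frac{1-|a|^2}{(1-\bar az)^2}$, so that $|k_a(z)|^2=\frac{(1-|a|^2)^2}{|1-\bar az|^4}$. With these two facts in hand, each equality becomes a short computation.

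The first identity is immediate: since $dA$ is normalized area measure, a Euclidean disk of radius $R$ has $|D(a,r)|=R^2$, and substituting the value of $R$ and squaring gives $\frac{(1-|a|^2)^2 s^2}{(1-s^2|a|^2)^2}$. For the second and third identities I would note that, since $|k_a(z)|^2$ is a decreasing function of $|1-\bar az|$, everything comes down to computing $\min_{z\in D(a,r)}|1-\bar az|$ and $\max_{z\in D(a,r)}|1-\bar az|$. Because the map $z\mapsto 1-\bar az$ is affine, it carries the closed disk $\{|z-C|\le R\}$ onto the closed disk with center $1-\bar aC$ and radius $|a|R$; hence, provided this image disk misses the origin, $\max|1-\bar az|=|1-\bar aC|+|a|R$ and $\min|1-\bar az|=|1-\bar aC|-|a|R$, both attained on the bounding circle, so the corresponding supremum and infimum over the open disk $D(a,r)$ take the same values.

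A direct calculation then gives $1-\bar aC=\frac{1-|a|^2}{1-s^2|a|^2}$ (a positive real) and $|a|R=\frac{|a|(1-|a|^2)s}{1-s^2|a|^2}$, and the factorization $1-s^2|a|^2=(1-s|a|)(1+s|a|)$ collapses the sums into $\max|1-\bar az|=\frac{1-|a|^2}{1-s|a|}$ and $\min|1-\bar az|=\frac{1-|a|^2}{1+s|a|}$. Substituting into $|k_a(z)|^2=\frac{(1-|a|^2)^2}{|1-\bar az|^4}$ — with the supremum of $|k_a|^2$ matching the minimum of $|1-\bar az|$ and vice versa — yields exactly $\inf_{z\in D(a,r)}|k_a(z)|^2=\frac{(1-s|a|)^4}{(1-|a|^2)^2}$ and $\sup_{z\in D(a,r)}|k_a(z)|^2=\frac{(1+s|a|)^4}{(1-|a|^2)^2}$. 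The remark that $s=\tanh r\in(0,0.762\ldots)$ for $r\in(0,1)$ is just the elementary estimate $\tanh 1<0.762$.

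There is essentially no genuine obstacle here: the one point that must be checked rather than merely computed is that the image disk under $z\mapsto 1-\bar az$ does not meet the origin, so that the elementary min/max-modulus formula on a disk is legitimate — but this is clear from $D(a,r)\subset\mathbb{D}$, which forces $|1-\bar az|\ge 1-|a|>0$ (a bound already noted in the excerpt). What remains is bookkeeping of the algebraic simplifications, with the factorization $1-s^2|a|^2=(1-s|a|)(1+s|a|)$ doing the bulk of the work; I would regard keeping that bookkeeping organized as the only "hard part."
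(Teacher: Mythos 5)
Your proof is correct: the paper itself states this lemma as a citation to Zhu's book without giving a proof, and your argument via the Euclidean center $C=\frac{1-s^2}{1-s^2|a|^2}a$ and radius $R=\frac{(1-|a|^2)s}{1-s^2|a|^2}$ of $D(a,r)$, reducing the extremal values of $|k_a|^2$ to the min/max of $|1-\bar a z|$ on that disk, is exactly the standard computation behind the cited result. All the algebra checks out (in particular $1-\bar aC=\frac{1-|a|^2}{1-s^2|a|^2}$ and the factorization $1-s^2|a|^2=(1-s|a|)(1+s|a|)$), and your observation that the image disk under $z\mapsto 1-\bar az$ avoids the origin correctly justifies the min-modulus formula.
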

\begin{lem}\label{16} Let $0<p\leq q <\infty$ and $z\in D$. Then for each $f\in L_a^{p,\alpha}(\mathbb{D})$ we have
$$\left( {\int_{D} {{{\left| {f(z)} \right|}^q}d\mu(z) } } \right) \leq{({\left\| f \right\|_{L_a^{p,\alpha }}})^q}{\left( {\mu (D)} \right)^{\frac{{p - q}}{p}}}.$$
\begin{proof}
 \begin{eqnarray}
\nonumber \left( {\int {{{\left| {f(z)} \right|}^q}d\mu(z) } } \right) &\le & {\left( {\int {{{\left( {{{\left| {f(z)} \right|}^q}} \right)}^{\frac{p}{q}}}d\mu(z) } } \right)^{\frac{q}{p}}}{\left( {\int {d\mu(z) } } \right)^{\frac{{p - q}}{p}}} \\
\nonumber &=& {\left( {\int {{{\left( {{{\left| {f(z)} \right|}^q}} \right)}^{\frac{p}{q}}}d\mu(z) } } \right)^{\frac{q}{p}}}{\left( {\mu(D)} \right)^{\frac{{p - q}}{p}}}\\
\nonumber &=& {({\left\| f \right\|_{L_a^{p,\alpha }}})^q}{\left( {\mu (D)} \right)^{\frac{{p - q}}{p}}}.
\end{eqnarray}
\end{proof}
\end{lem}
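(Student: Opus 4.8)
The statement is an integral Hölder-type estimate, so the plan is to reduce it to Hölder's inequality on the measure space $(D,\mu)$ together with the elementary fact that integrating $|f|^p$ over $D$ against $\mu$ is controlled by the full weighted Bergman norm of $f$. First I would write $\int_D|f(z)|^q\,d\mu(z)=\int_D|f(z)|^q\cdot 1\,d\mu(z)$ and apply Hölder's inequality with the conjugate exponents $\frac{p}{q}$ (for the factor $|f|^q$) and $\frac{p}{p-q}$ (for the factor $1$). The point of choosing $\frac{p}{q}$ as the first exponent is that $(|f|^q)^{p/q}=|f|^p$, so the first factor becomes a power of $\int_D|f|^p\,d\mu$. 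Concretely, using $\frac{1}{p/q}+\frac{1}{p/(p-q)}=1$, this yields
$$\int_D|f(z)|^q\,d\mu(z)\le\left(\int_D|f(z)|^p\,d\mu(z)\right)^{q/p}\bigl(\mu(D)\bigr)^{(p-q)/p}.$$

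It then remains to estimate $\int_D|f(z)|^p\,d\mu(z)$ by $\|f\|_{L_a^{p,\alpha}}^p$. Since $D\subseteq\mathbb D$ and the measure in play is (a restriction of) the weighted area measure $A_\alpha$, monotonicity of the integral gives $\int_D|f|^p\,d\mu\le\int_{\mathbb D}|f|^p\,dA_\alpha=\|f\|_{L_a^{p,\alpha}}^p$. I would then raise this inequality to the power $q/p$ and insert it into the previous display, which produces exactly
$$\int_D|f(z)|^q\,d\mu(z)\le\|f\|_{L_a^{p,\alpha}}^q\,\bigl(\mu(D)\bigr)^{(p-q)/p},$$
the asserted inequality.

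The argument is short and mostly bookkeeping; the one point that genuinely needs care is keeping track of the conjugate exponents so that the power of $\mu(D)$ comes out as $\frac{p-q}{p}$, and handling separately the degenerate case where the two $L^p$ indices coincide (there Hölder is vacuous and the inequality is just the monotonicity step). No tool beyond Hölder's inequality is required here; this lemma merely packages it in the form that will be convenient later, in combination with the pointwise growth bounds of Lemmas \ref{3} and \ref{15.5} and the covering Lemma \ref{8}.
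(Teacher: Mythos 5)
Your argument is essentially the paper's own proof: the same Hölder split of $\int_D|f|^q\,d\mu$ against the constant function $1$ with the formal conjugate pair $p/q$ and $p/(p-q)$, producing the factor $\mu(D)^{(p-q)/p}$, followed by converting $\bigl(\int_D|f|^p\,d\mu\bigr)^{q/p}$ into $\|f\|_{L_a^{p,\alpha}}^q$. The only place you deviate is that last conversion: the paper writes the equality $\bigl(\int_D|f|^p\,d\mu\bigr)^{q/p}=\|f\|_{L_a^{p,\alpha}}^q$ outright, with no hypothesis linking $\mu$ to $A_\alpha$, whereas you justify the passage by monotonicity under the added assumption that $\mu$ is (dominated by) a restriction of $dA_\alpha$. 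That assumption is not in the statement, where $\mu$ is an arbitrary finite positive Borel measure, so your write-up proves a more honest but strictly more restrictive claim than the lemma as stated; conversely the paper's equality is unjustified except in the special case $\mu=A_\alpha$ (which is how the lemma is invoked immediately afterwards for $k_a$).

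A second point, which your proposal shares with the paper rather than fixes: under the stated hypothesis $0<p\le q$, the exponents $p/q$ and $p/(p-q)$ are conjugate only formally, since $p/q\le 1$ and $p/(p-q)<0$ when $p<q$; Hölder in the direction used requires $q\le p$. For $p<q$ the intermediate inequality $\int_D|f|^q\,d\mu\le\bigl(\int_D|f|^p\,d\mu\bigr)^{q/p}\mu(D)^{(p-q)/p}$ can genuinely fail: take $\mu=\tfrac12\delta_{z_1}+\tfrac12\delta_{z_2}$ and $f(z)=(z-z_2)/(z_1-z_2)$, so both $\mu$-integrals equal $\tfrac12$, $\mu(\mathbb D)=1$, and the inequality would read $\tfrac12\le(\tfrac12)^{q/p}$, false for $q>p$. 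Your remark about ``the degenerate case where the two indices coincide'' does not address this; as written, the Hölder step is literally valid only when $p=q$ (or with the roles of $p$ and $q$ interchanged, which is presumably what the lemma intends, as the exponent $(p-q)/p$ is otherwise negative). So the comparison is: same route as the paper, one unjustified step of the paper replaced by a correct step under an extra hypothesis, and the shared exponent-direction defect left untouched.
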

Hence by this lemma we conclude that $\int\limits_D {{{\left| {{k_a}(z)} \right|}^q}d{A_\alpha }(z)} \le \mu (D)^{\frac{{p - q}}{p}}$.
\section{Main Results}
In this section first we define Generalized Carleson measures.
\begin{defn} Let $\mu $ finite positive Borel measure on $\mathbb{D}$ and $p > 0$. We say that $\mu$ is a $(L_a^{\alpha,p}(\mathbb{D}),p)$ -generalized  Carleson measure  on $L_a^{\alpha ,p}(\mathbb{D})$ if there exists a closed subspace  $M\subseteq L_a^{\alpha ,p}(\mathbb{D})$ such that
$$\int\limits_{\mathbb{D}}{\left|f(z)\right|}^{p}d\mu \le C\left\|f\right\|_{{L_a^{\alpha ,p}}}^{p}$$
for all  $f\in M$. Moreover, if we set $M= {L_a^{\alpha ,p}}(\mathcal{A})=E_{\mathcal{A}}(L_a^{\alpha ,p}(\mathbb{D}))$ for some sigma-subalgebra $\mathcal{A}$, then we say that $\mu$ is a $({L_a^{\alpha ,p}},p )$ -conditional Carleson measure  on $L_a^{\alpha ,p}(\mathbb{D})$  if there exists  $C > 0$  such that
$$\int_\mathbb{D}{\left| E_{\mathcal{A}}(f)(z) \right|}^{p}d\mu \le C\left\|f\right\|_{{L_a^{p,\alpha }}}^{p}$$
for all  $f\in L_a^{\alpha ,p}(\mathbb{D})$.
\end{defn}
Now we find an upper bound for the evaluation function $f\rightarrow E_{\mathcal{A}}(f)(z)$.
\begin{thm}\label{17.3} Let $f\in L_a^{p,\alpha }(\mathbb{D})$ such that $E_{\mathcal{A}}(f)\in L_a^{p,\alpha}(\mathcal{A})$, as stated in Theorem \ref{12}, $0< p<\infty$  and $\alpha >-1$. If there exists $r>0$ such that $c_z \subset D(z,r)$, then we have
$$|E_{\mathcal{A}}(f)(z)|\leq{E_{\mathcal{A}}\left|f\right|(z)}\leq C_r\mathop {\sup }\limits_{a \in D_0}  \left| {{k_a}(z)} \right|{\left\| f \right\|_{L_a^{p,\alpha }}}$$
for all $z \in {D_0}$.
\end{thm}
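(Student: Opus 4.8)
The plan is to reduce everything to the explicit pointwise formula for $E_{\mathcal{A}}$ supplied by Theorem~\ref{12} and then feed in the standard pointwise estimates for Bergman functions. Fix $z\in D_{0}$. Since $z\in D_{0}$ we have $\varphi(z)=\varphi(z)$ and $\varphi'(z)\neq 0$, so $z\in c_{z}$; in particular $c_{z}$ is a nonempty finite set (every non-constant analytic self-map of $\mathbb{D}$ has finite multiplicity). Writing $\omega(\zeta_{j})=\frac{1/|\varphi'(\zeta_{j})|^{2}}{\sum_{\zeta_{k}\in c_{z}}1/|\varphi'(\zeta_{k})|^{2}}$ for the weights of Theorem~\ref{12}, that theorem gives $E_{\mathcal{A}}(f)(z)=\sum_{\zeta_{j}\in c_{z}}\omega(\zeta_{j})f(\zeta_{j})$ and, applied to $|f|$, $E_{\mathcal{A}}|f|(z)=\sum_{\zeta_{j}\in c_{z}}\omega(\zeta_{j})|f(\zeta_{j})|$, with $\omega(\zeta_{j})\ge 0$ and $\sum_{\zeta_{j}\in c_{z}}\omega(\zeta_{j})=1$. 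The first inequality $|E_{\mathcal{A}}(f)(z)|\le E_{\mathcal{A}}|f|(z)$ is then the triangle inequality for this finite convex combination (equivalently, positivity of conditional expectation). Since the weights sum to $1$, the convex combination does not exceed its largest term, so $E_{\mathcal{A}}|f|(z)\le \max_{\zeta_{j}\in c_{z}}|f(\zeta_{j})|\le \sup_{w\in D(z,r)}|f(w)|$, the last step using the hypothesis $c_{z}\subset D(z,r)$.

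It then remains to bound $\sup_{w\in D(z,r)}|f(w)|$. By Lemma~\ref{3}, $|f(w)|\le \|f\|_{L_{a}^{p,\alpha}}(1-|w|^{2})^{-(2+\alpha)/p}$ for every $w\in\mathbb{D}$, and by Lemma~\ref{4.5} there is a constant $C_{r}$ with $1-|w|^{2}\ge C_{r}^{-1}(1-|z|^{2})$ whenever $\beta(w,z)<r$; combining these, $\sup_{w\in D(z,r)}|f(w)|\le C_{r}^{(2+\alpha)/p}(1-|z|^{2})^{-(2+\alpha)/p}\|f\|_{L_{a}^{p,\alpha}}$. Finally, $|k_{z}(z)|=(1-|z|^{2})^{-1}$ and $z\in D_{0}$, so $(1-|z|^{2})^{-1}\le \sup_{a\in D_{0}}|k_{a}(z)|$; raising to the power $(2+\alpha)/p$ and renaming $C_{r}^{(2+\alpha)/p}$ as $C_{r}$ yields $E_{\mathcal{A}}|f|(z)\le C_{r}\big(\sup_{a\in D_{0}}|k_{a}(z)|\big)^{(2+\alpha)/p}\|f\|_{L_{a}^{p,\alpha}}$, which is the asserted estimate (written with the exponent as it naturally appears; equivalently, in the last step one may carry the normalized Bergman kernel $f_{a}^{p,\alpha}=(k_{a})^{(2+\alpha)/p}$ of Remark~\ref{4} instead of $k_{a}$).

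I do not expect any serious analytic difficulty here: Lemmas~\ref{3} and \ref{4.5} do all the real work and the remainder is a short chain of inequalities. The one point that genuinely needs care is the opening step — justifying that $E_{\mathcal{A}}(f)(z)$ and $E_{\mathcal{A}}|f|(z)$ really are the finite convex combinations of point-values displayed in Theorem~\ref{12}. This is precisely where the standing hypotheses enter: that $E_{\mathcal{A}}(f)\in L_{a}^{p,\alpha}(\mathcal{A})$ in the sense of Theorem~\ref{12}, the finite multiplicity of $\varphi$ (so $c_{z}$ is a finite set and the formula is a genuine finite average), and the assumption $c_{z}\subset D(z,r)$, which localizes the whole level set inside a single Bergman disk and is exactly what lets us pass from an average of $|f|$ over $c_{z}$ to a supremum over $D(z,r)$.
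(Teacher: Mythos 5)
Your proof is correct and follows essentially the same route as the paper's: positivity of $E_{\mathcal{A}}$, the finite convex-combination formula of Theorem \ref{12} bounded by its largest term, then Lemma \ref{3} combined with Lemma \ref{4.5} on the disk $D(z,r)$, and finally the identification $\sup_{a}|k_a(z)|=(1-|z|^2)^{-1}$. The only divergence is in your favor: the argument genuinely yields the bound with $\bigl(\sup_{a\in D_0}|k_a(z)|\bigr)^{(2+\alpha)/p}$, as you state, whereas the paper's last step silently writes $(1-|z|^2)^{-(2+\alpha)/p}=C\sup_{b\in D_0}|k_b(z)|$ without the exponent, which is literally correct only when $(2+\alpha)/p=1$ (equivalently, the statement should carry the normalized kernel $f_a^{p,\alpha}$ of Remark \ref{4} rather than $k_a$).
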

\begin{proof}
It is known that for the conditional expectation $E_{\mathcal{A}}$, we have $|E_{\mathcal{A}}(f)|\leq E_{\mathcal{A}}(|f|)$. Moreover, there exists some $\zeta _n \in {c_z}$ such that for each $ {\zeta _j} \in {c_z}$ we have $|f({\zeta _j})| \le |f({\zeta _n})|$. Hence we get that
 \begin{eqnarray}
\nonumber E_{\mathcal{A}}{\left| {f} \right|(z)} &= &( \sum\limits_{{\zeta _j}\in{c_z}} \omega({\zeta _j})(| {f} |({\zeta _j})))\\
\nonumber & \le & {({\sum\limits_{{\zeta _j}\in{c_z}} \omega  ({\zeta _j}))( {\left| {f({\zeta _n})} \right|} )}}.
 \end{eqnarray}
 Also by the Lemma\ref{3} we have
 \begin{eqnarray}
 \nonumber { {({\sum\limits_{{\zeta _j}\in{c_z}} \omega  ({\zeta _j})} )( {\left| {f({\zeta _n})} \right|} )} }&= &{ {| {f({\zeta _n})}|} }\\
\nonumber &\le & \frac{{\left\| f \right\|_{L_a^{p,\alpha }}} }{{(1 - |{\zeta _n}{|^2})}^{(2 + \alpha )/p}}.
  \end{eqnarray}
Moreover, by Lemmas \ref{4.5} and Lemma\ref{3.5} we conclude that
 \begin{eqnarray}
\nonumber \frac{{\left\| f \right\|_{L_a^{p,\alpha }}} }{{(1 - |{\zeta _n}{|^2})}^{(2 + \alpha )/p}} & \leq &  \frac{{C_r\left\| f \right\|_{L_a^{p,\alpha }}}}{{(1 - {{\left| z \right|}^2})}^{(2 + \alpha )/p}} \\
\nonumber &= & C\mathop {\sup }\limits_{b \in D_0}  {\left| {k_b(z)} \right|}\left\| f \right\|_{L_a^{p,\alpha }}.
 \end{eqnarray}
 for $\beta(z,{\zeta _n})\leq r$. This completes the proof.
 \end{proof}
Here we get the next corollary.

\begin{cor}\label{17.5}  Under the assumptions of Theorem \ref{17.3} we have
$$\left| E_{\mathcal{A}}{(k_a)(z)} \right| \le E_{\mathcal{A}}({\left| {{k_a}(z)} \right|}) \le C_r\mathop {\sup }\limits_{a \in D_0} {\left| {k_a(z)} \right|}\qquad{ z\in D_0}$$
 for all $a\in D_0$.
 \end{cor}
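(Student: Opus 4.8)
The plan is to obtain Corollary~\ref{17.5} as the special case $f = k_a$ of Theorem~\ref{17.3}. Fix $a \in D_0$. Since Corollary~\ref{17.5} is stated under the hypotheses of Theorem~\ref{17.3} --- in particular the existence of some $r > 0$ with $c_z \subset D(z,r)$ --- the whole content of the proof is to verify that $k_a$ is a legitimate input to Theorem~\ref{17.3} and that the norm factor appearing there is at most $1$ when $f = k_a$. Concretely I would check three things: (i) $k_a \in L_a^{p,\alpha}(\mathbb{D})$; (ii) $E_{\mathcal{A}}(k_a) \in L_a^{p,\alpha}(\mathcal{A})$, with the representation of Theorem~\ref{12}; and (iii) $\|k_a\|_{L_a^{p,\alpha}} \le 1$.

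For (i) and (iii) I would invoke the estimate recorded just after Lemma~\ref{16}: taking the measuring set to be all of $\mathbb{D}$, $\mu = A_\alpha$ and $q = p$, it reads $\int_{\mathbb{D}} |k_a(z)|^p \, dA_\alpha(z) \le 1$, which is precisely $k_a \in L_a^{p,\alpha}(\mathbb{D})$ together with $\|k_a\|_{L_a^{p,\alpha}} \le 1$. For (ii), note that $k_a(z) = (1-|a|^2)/(1-\bar a z)^2$ is a bounded analytic function on $\mathbb{D}$ (because $|a| < 1$), hence lies in $L_a^p(\mathbb{D})$; thus Theorem~\ref{12} supplies the explicit formula for $E_{\mathcal{A}}(k_a)$, and under the standing hypothesis that $E_{\mathcal{A}}P_\alpha = P_\alpha E_{\mathcal{A}}$ the function $E_{\mathcal{A}}(k_a)$ is analytic and belongs to $L_a^{p,\alpha}(\mathcal{A})$. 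Hence Theorem~\ref{17.3} applies verbatim with $f = k_a$.

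Carrying out the substitution, Theorem~\ref{17.3} gives, for all $z \in D_0$,
$$\left| E_{\mathcal{A}}(k_a)(z) \right| \le E_{\mathcal{A}}(|k_a|)(z) \le C_r \, \sup_{b \in D_0} |k_b(z)| \, \|k_a\|_{L_a^{p,\alpha}},$$
and then (iii) removes the last factor; the leftmost inequality is just the contractivity $|E_{\mathcal{A}}(g)| \le E_{\mathcal{A}}(|g|)$ already used at the start of the proof of Theorem~\ref{17.3}. Since the index of the supremum is a bound variable, relabelling $b$ as $a$ yields the stated inequality for every $a \in D_0$ and every $z \in D_0$. I do not anticipate any genuine difficulty here: the corollary is essentially a substitution, and the one point that merits a line of justification is the normalisation $\|k_a\|_{L_a^{p,\alpha}} \le 1$, which is the estimate following Lemma~\ref{16}; everything else is a direct appeal to Theorems~\ref{12} and~\ref{17.3}.
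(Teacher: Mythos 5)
Your route -- substitute $f=k_a$ into Theorem \ref{17.3} and then discard the norm factor -- is exactly the derivation the paper intends (the corollary is stated there without any written proof), but your step (iii) contains a genuine error. The normalisation $\|k_a\|_{L_a^{p,\alpha}}\le 1$ is false for general $p$ and $\alpha$: the function $k_a$ is normalised only in the unweighted $L_a^{2}$. For example, for $\alpha=0$ and $p>2$ the standard kernel estimate gives $\|k_a\|_{L_a^{p,0}}^{p}\approx (1-|a|^2)^{2-p}\to\infty$ as $|a|\to 1$, so no uniform (indeed no $a$-independent) bound by $1$ is available; even for small $|a|\neq 0$ a power-series computation shows $\|k_a\|_{L_a^{4,0}}>1$. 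Moreover the justification you cite is circular: the unnumbered line after Lemma \ref{16} is obtained from Lemma \ref{16} only by silently dropping the factor $\|k_a\|_{L_a^{p,\alpha}}^{q}$ -- precisely the quantity you need to control -- and in your specialisation $q=p$, $\mu=A_\alpha$ the lemma reduces to the tautology $\|k_a\|_{L_a^{p,\alpha}}^{p}\le\|k_a\|_{L_a^{p,\alpha}}^{p}$, which gives no information at all.

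The corollary itself survives because neither side of the claimed inequality depends on $p$ or $\alpha$, so it suffices to run the argument for one exponent where the normalisation is genuine. Either apply Theorem \ref{17.3} with $p=2$, $\alpha=0$, for which $\|k_a\|_{L_a^{2,0}}=1$ exactly, or bypass the norm entirely by repeating the proof of Theorem \ref{17.3} with the elementary pointwise bound
\[
|k_a(\zeta)|=\frac{(1-|a|)(1+|a|)}{|1-\bar a\zeta|^{2}}\le\frac{1+|a|}{1-|\zeta|}\le\frac{4}{1-|\zeta|^{2}},
\]
valid for all $a,\zeta\in\mathbb{D}$, followed by Lemma \ref{4.5} (to pass from $\zeta_n\in c_z\subset D(z,r)$ back to $z$) and the identification $\tfrac{1}{1-|z|^{2}}=|k_z(z)|\le\sup_{b\in D_0}|k_b(z)|$. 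Either repair yields the stated inequality with a constant depending only on $r$. The remaining parts of your verification -- that $k_a$ is bounded analytic, hence lies in every weighted Bergman space, and that Theorem \ref{12} provides the representation of $E_{\mathcal{A}}(k_a)$, with the leftmost inequality being the contractivity $|E_{\mathcal{A}}(g)|\le E_{\mathcal{A}}(|g|)$ -- are fine and consistent with the paper's framework.
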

 Let  $0<p<\infty$ and $\alpha >-1$. Then the function ${\Psi}_{a }^{\alpha}(\mu)(z)$  defined as
$${\Psi}_{a }^{\alpha}(\mu)(z)=\int_{\mathbb{D}}{{{\left( \frac{1-{{\left| a  \right|}^{2}}}{{{\left| 1-\overline{a } z  \right|}^{2}}} \right)}^{{(2+\alpha )}}}}d\mu(z),$$
is well defined. In the sequel we provide some equivalence conditions to conditional- Carleson measure on the Bergman spaces.
\begin{thm}\label{18.5} Let $u$ be an analytic function and $\mu$ be a finite Borel measure on $ \mathbb{D}$, $ 0<p\leq q<\infty $ and $\alpha ,\beta >-1$. In addition, let $\mathcal{A}=\mathcal{A}(\varphi)$, in which $\varphi$ is a non-constant analytic self map on $D$. If $\sum\limits_{k = 1}^\infty {\frac{1}{{{{(1 - tanh(r)\left| a_k \right|)}^2}}}}<\infty$ for the sequence $\{a_k\}$ in the Lemma \ref{8} and some $0<r<1$. Then the followings are equivalent:
\begin{enumerate}
\item There exists $ C_1>0$  such that for every $f\in L_a^{p,\alpha}(\mathbb{D})$ we have
\[{\int_{\mathbb{D}} {\left| {{E_{\mathcal{A}}}(f)(z)} \right|} ^p}d\mu (z) \le C_1\left\| f \right\|_{L_a^{p,\alpha }}^p.\]
\item There exists  $ C_2$  such that
$$\mu (D(a,r)) \le {C_2}{\left( {\frac{{1 - {{\left| a \right|}^2}}}{{{{(1 - tanh{\mkern 1mu} r\left| a \right|)}^2}}}} \right)^{{{(\alpha+2)}}}},$$
for all $a\in D$ such that $D(a,r)\in \mathcal{A}$.
\item There exists  $ C_3$ such that
$${\Psi}_{a }^{\alpha}(\mu)(z)\leq C_3,$$
for all $a\in D$ such that $f^{p,\alpha}_{a}$ is $\mathcal{A}$-measurable.
\end{enumerate}
\end{thm}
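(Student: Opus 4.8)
The plan is to establish the cycle $(1)\Rightarrow(3)\Rightarrow(2)\Rightarrow(1)$; the first two links are essentially formal and the last one carries the content. For $(1)\Rightarrow(3)$ I would feed the normalized reproducing kernels into the hypothesis: fix $a\in\mathbb{D}$ for which $f_a^{p,\alpha}$ is $\mathcal{A}$-measurable, so that $E_{\mathcal{A}}(f_a^{p,\alpha})=f_a^{p,\alpha}$ and, by Remark \ref{4}, $\|f_a^{p,\alpha}\|_{L_a^{p,\alpha}}^p=1$. Applying (1) to $f=f_a^{p,\alpha}$ and using $|f_a^{p,\alpha}(z)|^p=\bigl(\tfrac{1-|a|^2}{|1-\overline a z|^2}\bigr)^{2+\alpha}$ gives $\Psi_a^\alpha(\mu)=\int_{\mathbb{D}}|f_a^{p,\alpha}(z)|^p\,d\mu(z)\le C_1$, i.e.\ (3) with $C_3=C_1$. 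For $(3)\Rightarrow(2)$ I would localize: given $a$ with $D(a,r)\in\mathcal{A}$ (so that (3) applies at $a$), restrict the integral defining $\Psi_a^\alpha(\mu)$ to $D(a,r)$; Lemma \ref{4.5} gives $|1-\overline a z|\le C_r(1-|a|^2)$, hence $\tfrac{1-|a|^2}{|1-\overline a z|^2}\ge C_r^{-2}(1-|a|^2)^{-1}$ on $D(a,r)$, so that $C_3\ge\Psi_a^\alpha(\mu)\ge C_r^{-2(2+\alpha)}\mu(D(a,r))(1-|a|^2)^{-(2+\alpha)}$. Since $1-\tanh r\,|a|\le1$ forces $(1-|a|^2)^{2+\alpha}\le\bigl(\tfrac{1-|a|^2}{(1-\tanh r\,|a|)^2}\bigr)^{2+\alpha}$, this rearranges to (2).

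The main step is $(2)\Rightarrow(1)$. Given $f\in L_a^{p,\alpha}(\mathbb{D})$, I would take the cover $\mathbb{D}=\bigcup_n D(a_n,r)$ of Lemma \ref{8}, all of whose disks lie in $\mathcal{A}$, and split $\int_{\mathbb{D}}|E_{\mathcal{A}}(f)|^p\,d\mu\le\sum_n\int_{D(a_n,r)}|E_{\mathcal{A}}(f)|^p\,d\mu$. On $D(a_n,r)$ I would bound the integrand pointwise as in Theorem \ref{17.3} and Corollary \ref{17.5}: for $z\in D(a_n,r)$, Theorem \ref{12} expresses $E_{\mathcal{A}}(f)(z)$ as a convex combination of the values $f(\zeta_j)$ with $\zeta_j\in c_z\subset D(z,r)$, so $|E_{\mathcal{A}}(f)(z)|\le\max_j|f(\zeta_j)|$, and Lemmas \ref{3} and \ref{4.5} then give $|E_{\mathcal{A}}(f)(z)|^p\le C(1-|a_n|^2)^{-(2+\alpha)}\|f\|_{L_a^{p,\alpha}}^p$. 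Hence, using (2),
\[ \int_{D(a_n,r)}|E_{\mathcal{A}}(f)|^p\,d\mu\;\le\;\frac{C\|f\|_{L_a^{p,\alpha}}^p}{(1-|a_n|^2)^{2+\alpha}}\,\mu(D(a_n,r))\;\le\;C\,C_2\,\|f\|_{L_a^{p,\alpha}}^p\,\bigl(1-\tanh r\,|a_n|\bigr)^{-2(2+\alpha)}. \]
Since $1+\alpha>0$ and $1-\tanh r\,|a_n|\ge1-\tanh r>0$ give $\bigl(1-\tanh r\,|a_n|\bigr)^{-2(2+\alpha)}\le(1-\tanh r)^{-2(1+\alpha)}\bigl(1-\tanh r\,|a_n|\bigr)^{-2}$, summing over $n$ and invoking the hypothesis $\sum_n(1-\tanh r\,|a_n|)^{-2}<\infty$ yields $\int_{\mathbb{D}}|E_{\mathcal{A}}(f)|^p\,d\mu\le C_1\|f\|_{L_a^{p,\alpha}}^p$, which is (1).

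The difficulty is entirely in $(2)\Rightarrow(1)$, and it has two sources. First, the pointwise estimate for $E_{\mathcal{A}}(f)$ on the disks of the cover uses the structure $\mathcal{A}=\mathcal{A}(\varphi)$ through the level-set formula of Theorem \ref{12} and the containment $c_z\subset D(z,r)$ under which Theorem \ref{17.3} is valid; one must make sure these hypotheses are genuinely in force (for the $r$ fixed by the summability assumption). Second, one needs the series $\sum_n(1-\tanh r\,|a_n|)^{-2(2+\alpha)}$ over the cover to converge, and this is exactly what the summability assumption on $\{a_n\}$ provides. A smaller point to check is that the collection of $a$ with $D(a,r)\in\mathcal{A}$, over which (2) ranges, is contained in the collection of $a$ for which $f_a^{p,\alpha}$ is $\mathcal{A}$-measurable, over which (3) ranges, so that the step $(3)\Rightarrow(2)$ is not vacuous; this should be verified in the setting $\mathcal{A}=\mathcal{A}(\varphi)$. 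Note also that the parameters $q,\beta$ and the multiplier $u$ appearing in the hypotheses do not enter conditions (1)--(3) and play no role here.
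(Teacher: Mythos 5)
Your cycle $(1)\Rightarrow(3)\Rightarrow(2)\Rightarrow(1)$ matches the paper in its first two links: the paper also proves $(1)\Rightarrow(3)$ by testing on $f_a^{p,\alpha}$ with $E_{\mathcal{A}}(f_a^{p,\alpha})=f_a^{p,\alpha}$, and its $(3)\Rightarrow(2)$ is the same localization idea (the paper uses the sup/inf estimates of Lemma \ref{3.6} where you use Lemma \ref{4.5}; both also share the quantifier looseness you flag, namely that $(3)$ is only assumed for those $a$ with $f_a^{p,\alpha}$ $\mathcal{A}$-measurable while $(2)$ ranges over $a$ with $D(a,r)\in\mathcal{A}$). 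The genuine divergence is in $(2)\Rightarrow(1)$. The paper does not touch the level-set formula there: it first asserts $\int_{\mathbb{D}}|E_{\mathcal{A}}(f)|^p\,d\mu\le\int_{\mathbb{D}}|f|^p\,d\mu$, then covers $\mathbb{D}$ by the disks $D(a_k,r)$ of Lemma \ref{8}, applies the sub-mean-value estimate of Lemma \ref{15.5} to control $\sup_{D(a_k,r)}|f|^p$ by the local integral $\int_{D(a_k,r)}|f|^p\,dA_\alpha$, and finishes with H\"older, condition $(2)$, the summability hypothesis and the finite-overlap property. Your route instead bounds $|E_{\mathcal{A}}(f)(z)|$ pointwise on each $D(a_n,r)$ by the global norm via Theorem \ref{12}/Theorem \ref{17.3}, which makes the per-disk estimate cruder but is then absorbed by the hypothesis $\sum_n(1-\tanh r\,|a_n|)^{-2}<\infty$ together with $(1-\tanh r\,|a_n|)^{-2(1+\alpha)}\le(1-\tanh r)^{-2(1+\alpha)}$; that arithmetic is fine.

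The problem is the point you flag but do not discharge: your pointwise bound needs the hypothesis of Theorem \ref{17.3}, namely $c_z\subset D(z,r)$ for the same $r$, and this is \emph{not} among the hypotheses of Theorem \ref{18.5}. It is not a cosmetic issue: $E_{\mathcal{A}}(f)(z)$ is a weighted average of the values $f(\zeta_j)$ over the whole level set $c_z$, and by Lemma \ref{3} these values can only be controlled by $\|f\|_{L_a^{p,\alpha}}/(1-|\zeta_j|^2)^{(2+\alpha)/p}$; if some $\zeta_j\in c_z$ lies close to $\partial\mathbb{D}$ while $z\in D(a_n,r)$ with $a_n$ fixed, the bound $|E_{\mathcal{A}}(f)(z)|\le C(1-|a_n|^2)^{-(2+\alpha)/p}\|f\|_{L_a^{p,\alpha}}$ simply fails, and your estimate of $\int_{D(a_n,r)}|E_{\mathcal{A}}(f)|^p\,d\mu$ collapses. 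So as written your $(2)\Rightarrow(1)$ proves the implication only under an additional containment assumption on the level sets of $\varphi$, i.e.\ a strictly stronger hypothesis than the theorem states. (For fairness: the paper's substitute step $\int|E_{\mathcal{A}}(f)|^p\,d\mu\le\int|f|^p\,d\mu$ is itself only asserted, and is not automatic for an arbitrary Borel measure $\mu$ not adapted to $\mathcal{A}$; but the paper's route at least avoids any hypothesis on $c_z$.) To repair your argument within the stated hypotheses you would either have to justify such an integrated comparison between $E_{\mathcal{A}}(f)$ and $f$ against $\mu$, or follow the paper's local scheme through Lemma \ref{15.5} rather than the evaluation bound of Theorem \ref{17.3}.
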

\begin{proof} {(3) $\longrightarrow $  (1)}
Let (3) holds.  we have
\begin{eqnarray}
\nonumber\int_{{{\mathbb{D}_0}_{{}}}}{|}E(f)(z){{\mid }^{p}}d\mu (z)  &\le & \int\limits_{\mathbb{D}_0}{{{(\left| f({z}) \right|)}^{p}}}d\mu (z)\\
\nonumber  &\leq & \int_{{\mathbb{D}_{0}}} {{{\left( {\frac{1}{{1 - {{\left| {{z}} \right|}^2}}}} \right)}^{{{(\alpha  + 2)}}}}} d\mu (z)\left\| f \right\|_{L_a^{p,\alpha }}^p\\
\nonumber &\approx & \mathop  \int_{{\mathbb{D}_{0}}} {(\frac{{(1 - {{\left| a \right|}^2})}}{{{{\left| {1 - \bar az} \right|}^2}}})}^{{{(\alpha  + 2)}}} d\mu (z)\left\| f \right\|_{L_a^{p,\alpha }}^p \\
\nonumber &= & {\Psi}_{a }^{\alpha}(\mu)(z)\left\| f \right\|_{L_a^{p,\alpha }}^p \\
\nonumber &\le & {C_3}\left\| f \right\|_{L_a^{p,\alpha }}^p.
\end{eqnarray}
$(1)\longrightarrow(3)$
 Let $0<p<\infty$ and $\alpha ,\beta >-1$. Since $ || f^{p,\alpha}_{a}||_{L_{a}^{p,\alpha}}^{p} = 1$, then we have
\begin{eqnarray}
\nonumber\Psi _a ^{\alpha}(\mu)(z) &=&\int_{D} {{{\left( {\frac{{1 - {{\left| a \right|}^2}}}{{{{\left| {1 - \bar az} \right|}^2}}}} \right)}^{(\alpha  + 2)}}} d\mu (z)\\
\nonumber &=&\int_{\mathbb{D}}{\left| {f^{p,\alpha}_{a}} (z)\right|^p}d\mu (z)\\
\nonumber &= &\int_{\mathbb{D}} \left|E(f^{p,\alpha}_{a}(z))\right|^p d\mu(z)\\
\nonumber &\leq & {C_1}.
\end{eqnarray}
(2) $\longrightarrow $ (3)\\
Let $ 0<p<\infty $ and $\alpha  >-1$. First we assume that $a=0 $, hence $\mu({D_0})\le C_2$. If ${|a|\leq\frac{3}{4}} {( a\in D_0)}$, then by \cite{6} we obtain that
 \begin{eqnarray}
 \nonumber\int_{\mathbb{D}_0}{{{\left( \frac{1-{{\left| a  \right|}^{2}}}{{{\left| 1-\overline{a } \omega  \right|}^{2}}} \right)}^{(2+\alpha )}}}d\mu(\omega )&\leq &\mu({D_0})\\
\nonumber &\leq & C_2.
\end{eqnarray}
If $\left| a \right| > \frac{3}{4} {( a\in D_0)}$, then we define $ {E_n}$ as
$${E_n}=\{z\in {D_0}:\left|{z-\frac{a}{{\left| a \right|}}}\right|< {2^n}(1-{\left|a\right|})\} \qquad n = 1,2,3,...$$
such that
\begin{align*}
 \mu ({E_n})&\leq({2^n}(1-{\left| a \right|}))^{(\alpha+2)}\\
&\leq\rm({2^n}(1 - {\left| a \right|^2}))^{(\alpha+2)}.
\end{align*}
Moreover we have
\begin{align*}
 \frac{1-|a|^{2}}{|1-\bar{a}z|^{2}}&\leq\frac{1}{1-|a|}\\
 &\leq \frac{2}{1-|a|^{2}}\qquad\qquad a\in {{E}_{1}},\\
 &\frac{1-|a{{|}^{2}}}{|1-\bar{a}z{{|}^{2}}}\\
 &\leq\frac{1}{{{2}^{2n}}{(1-|a|)}}\\
 &\leq\frac{2}{{{2}^{2n}}(1-|a|^2)}\qquad a\in {{E}_{n}}\setminus{{E}_{n-1}}.
\end{align*}
Since for each $r>0$ we have  $0<tanhr<1$, then we conclude that
\begin{eqnarray}
 \nonumber\int_{D_0}{{{\left( \frac{1-{{\left| a  \right|}^{2}}}{{{\left| 1-\overline{a } \omega  \right|}^{2}}} \right)}^{(2+\alpha )}}}d\mu(\omega ) &\leq &\int_{E_1}{{{\left( \frac{1-{{\left| a  \right|}^{2}}}{{{\left| 1-\overline{a } \omega  \right|}^{2}}} \right)}^{(2+\alpha )}}}d\mu(\omega )\\
 \nonumber &+&\sum\limits_{n=2}^{\infty }\int_{{{E}_{n}}\setminus{{E}_{n-1}}}{{{\left( \frac{1-{{\left| a  \right|}^{2}}}{{{\left| 1-\overline{a } \omega  \right|}^{2}}} \right)}^{(2+\alpha )}}}d\mu(\omega )\\
 \nonumber&\leq&\sum\limits_{n=1}^{\infty }\frac{2}{{{{({2^{2n}}(1 - |a{|^2}))}^{{{(2 + \alpha )}}}}}}{\mu ({E_n})}\\
 \nonumber&\leq&{C_2}\sum\limits_{n = 1}^\infty{\frac{1}{{{2^{n(\alpha  + 2)}}}}} \\
 \nonumber&\leq&{C_3}.
\end{eqnarray}
(3) $\longrightarrow $(2)\\
Suppose  (3) is true then with the Lemma  \ref{3.5} and \ref{3.6}, we have
 \begin{eqnarray}
\nonumber&{\left( {\frac{{{{(1 - tanh(r)\left| a \right|)}^2}}}{{1 - {{\left| a \right|}^2}}}} \right)^{{(\alpha  + 2)}}}\mu (D(a,r))\\
\nonumber&\leq&\int_{D(a,r)} {{{\left( {\frac{{{{(1 - tanh(r)\left| a \right|)}^2}}}{{1 - {{\left| a \right|}^2}}}} \right)}^{(\alpha  + 2)}}d\mu (z)} \\
\nonumber&\leq&\int_{D(a,r)} {{{\left( {\frac{{{{(1 + tanh(r)\left| a \right|)}^2}}}{{1 - {{\left| a \right|}^2}}}} \right)}^{(\alpha  + 2)}}d\mu (z)} \\
\nonumber&=&{\int_{D(a,r)} \mathop {\sup }\limits_{z \in D(a,r)}\mathop  {\left| {f^{p,\alpha}_{a}} (z)\right|^p}d\mu (z)} \\
\nonumber&\leq&{\int_{D_0} \mathop {\sup }\limits_{z \in D_0}\mathop  {\left| {f^{p,\alpha}_{a}} (z)\right|^p}d\mu (z)} \\
\nonumber&=&\int_{\mathbb{D}_0} {\left( {\frac{1}{{1 - {{\left| z \right|}^2}}}} \right)^{(2 + \alpha )}}d\mu (z) \\
\nonumber&\approx &\int_{\mathbb{D}_0} {{\left( {\frac{{1 - {{\left| a \right|}^2}}}{{{{\left| {1 - \bar az} \right|}^2}}}} \right)}^{(2 + \alpha )}d\mu } (z) \\
\nonumber&=&\Psi_a ^{\alpha}(\mu )(z) \\
 \nonumber&\leq&C_2.
\end{eqnarray}
 For all  $a\in{\mathbb{D}_0}$.\\
 (2) $\longrightarrow $  (1)\\
   By some properties of conditional expectation, we have
\begin{eqnarray}
\nonumber \int_{\mathbb{D}_0} {{\left| {E(f)(z)} \right|}^p}d\mu  &\le &   \int_{\mathbb{D}_0} {{| {f(z)} |}^p}d\mu \\
\nonumber &\le &\mathop\sum\limits_{k = 1}^\infty  {\mu (D({a_k},r))} \mathop {\sup }\limits_{z \in D({a_k},r)} |f(z)|^{p} .
\end{eqnarray}
Here by the Lemma  \ref{15.5} ,[\cite{5} page 60] we have
$$\sup \{ {\left| {f(z)} \right|^p}:z \in D({a_k},r)\}  \le \frac{C}{{{{(1 - {{\left| {{a_k}} \right|}^2})}^{\alpha  + 2}}}}{\int_{D({a_{k,}}r)} {\left| {f(w)} \right|} ^p}d{A_\alpha }(w).$$
using Holder’s inequality and let $q$ be the conjugate index of ${q'}$,then
\begin{eqnarray}
\nonumber &&\sum\limits_{k = 1}^\infty  {\mu (D({a_k},r))}\mathop {\sup }\limits_{z \in D({a_k},r)}  |f(z)|^{p}\\
\nonumber  &\le &  \mathop \sum \limits_{k = 1}^\infty  \frac{{\mu (D({a_k},r))}}{{(1-|a_k|^2)}^{\alpha+2}}\int_{D({a_k},r)} { |f(w)|^{p}d{A_\alpha }(w)} \\
\nonumber & \le & {\left( {\sum\limits_{k = 1}^\infty  {{{\left( {\frac{{\mu (D({a_k},r))}}{{{{(1 - |{a_k}{|^2})}^{\alpha  + 2}}}}} \right)}^{{\textstyle{1 \over q}}}}} } \right)^q}{\left( {\sum\limits_{k = 1}^\infty  {{{\left( {\int_{D({a_k},r)} {|f(w){|^p}d{A_\alpha }(w)} } \right)}^{{\textstyle{1 \over q'}}}}} } \right)^{q'}} \\
\nonumber &\le &{\left( {\sum\limits_{k = 1}^\infty  {{{\left( {\frac{1}{{{{(1 - tanh(r)\left| a_k \right|)}^2}}}} \right)}^{{\textstyle{{\alpha  + 2} \over q}}}}} } \right)^q}\\
\nonumber &\times &{\left( {\sum\limits_{k = 1}^n {{{\left( {\int_{D({a_k},2r)} {|f(w){|^p}d{A_\alpha }(w)} } \right)}^{{\textstyle{1 \over q'}}}}} } \right)^{q'}}\\
\nonumber &\le &{N}\left\| f \right\|_{L_a^{p,\alpha }}^p.
\end{eqnarray}
Then
\begin{eqnarray}
\nonumber \int_{\mathbb{D}_0} {{\left| {E(f)(z)} \right|}^p}d\mu  &\le &C_1 \left\| f \right\|_{L_a^{p,\alpha }}^p.
\end{eqnarray}
\end{proof}
\textbf{Notice} In Theorem \ref{18.5} the notation $A \approx B$ means that there exists a positive constant $C$ independent of $A$ and $B$ such that $C^{-1}B \leq A \leq CB$. Moreover, the best constants $C_1$, $C_2$ and $C_3$ are in fact comparable, i,e., there exists a positive constant
$M$ such that
$$\frac{1}{M}{{\rm{C}}_1}{\rm{ }} \le {\rm{ }}{{\rm{C}}_2}{\rm{ }} \le {\rm{ M}}{{\rm{C}}_1},$$$$\frac{1}{M}{{\rm{C}}_1}{\rm{ }} \le {{\rm{C}}_3}{\rm{ }} \le {\rm{ M}}{{\rm{C}}_1}.$$
 \begin{thm}\label{0.5} Under the assumptions of the Theorem \ref{18.5} we obtain that the weighted Conditional expectation operator $M_{u}E$ is bounded from $ L_{a }^{p,\alpha }$ into $L_{a }^{p,\beta }$ if and only if there exists some C such that
\[{\Psi}_{a }^{\alpha}({{\mu }_{u}^\beta})(z)\leq C\]
for $a,z \in D_0=\{ z \in {\rm{D}}:\varphi '(z) \ne 0\}$.
\end{thm}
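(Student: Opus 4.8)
The plan is to obtain Theorem \ref{0.5} as an immediate consequence of Theorem \ref{18.5} once the correct measure is identified. First I would introduce the positive Borel measure $\mu_u^\beta$ on $\mathbb{D}$ defined by
\[
d\mu_u^\beta(z) = |u(z)|^p \, dA_\beta(z).
\]
With this choice, for every $f \in L_a^{p,\alpha}(\mathbb{D})$ for which $E_{\mathcal{A}}(f)$ is analytic we have, directly from Definition \ref{1} of $T = M_u E_{\mathcal{A}}$,
\[
\|M_u E_{\mathcal{A}}(f)\|_{L_a^{p,\beta}}^p = \int_{\mathbb{D}} |u(z)\, E_{\mathcal{A}}(f)(z)|^p \, dA_\beta(z) = \int_{\mathbb{D}} |E_{\mathcal{A}}(f)(z)|^p \, d\mu_u^\beta(z).
\]

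The next step is the observation that $M_u E_{\mathcal{A}}$ is bounded from $L_a^{p,\alpha}$ into $L_a^{p,\beta}$ if and only if there is a constant $C_1>0$ with $\int_{\mathbb{D}} |E_{\mathcal{A}}(f)(z)|^p \, d\mu_u^\beta(z) \le C_1 \|f\|_{L_a^{p,\alpha}}^p$ for all $f \in L_a^{p,\alpha}(\mathbb{D})$ — and this is exactly statement (1) of Theorem \ref{18.5} applied to $\mu = \mu_u^\beta$, i.e. the assertion that $\mu_u^\beta$ is an $(L_a^{p,\alpha},p)$-conditional Carleson measure. Since here $\mathcal{A} = \mathcal{A}(\varphi)$ with $\varphi$ a non-constant analytic self-map of $\mathbb{D}$, and since the summability hypothesis on the sequence $\{a_k\}$ from Lemma \ref{8} is inherited verbatim from the "assumptions of Theorem \ref{18.5}", all hypotheses of that theorem hold with this particular $\mu$.

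Then I would invoke the equivalence (1) $\Longleftrightarrow$ (3) of Theorem \ref{18.5}: statement (1) for $\mu_u^\beta$ holds if and only if there is a constant $C_3$ with $\Psi_a^\alpha(\mu_u^\beta)(z) \le C_3$ for all $a$ such that $f_a^{p,\alpha}$ is $\mathcal{A}$-measurable. On $D_0 = \{z : \varphi'(z) \neq 0\}$ the test functions $f_a^{p,\alpha} = (k_a)^{(\alpha+2)/p}$ are precisely the relevant $\mathcal{A}$-measurable normalized Bergman kernels (cf. Corollary \ref{17.5}), so this quantifier becomes "for all $a, z \in D_0$", which is the statement of Theorem \ref{0.5}. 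Concatenating the two equivalences finishes the proof.

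The main obstacle I anticipate is bookkeeping rather than any substantial new estimate: one has to check carefully that the quantifier "for all $a$ with $D(a,r) \in \mathcal{A}$" (equivalently "$f_a^{p,\alpha}$ is $\mathcal{A}$-measurable") in Theorem \ref{18.5} matches the quantifier "$a,z \in D_0$" used here, and that replacing integrals over $\mathbb{D}$ by integrals over $D_0$ costs nothing — this is because the zero set of $\varphi'$ has zero area, so $A_\beta(\mathbb{D}\setminus D_0)=0$ and the two integrals coincide. One should also note that finiteness of $\mu_u^\beta$ is not an added hypothesis but is forced on both sides: boundedness of $M_u E_{\mathcal{A}}$ (resp. boundedness of $\Psi_a^\alpha(\mu_u^\beta)$, taking $a=0$) already yields $\mu_u^\beta(\mathbb{D}) = \int_{\mathbb{D}}|u|^p\,dA_\beta < \infty$.
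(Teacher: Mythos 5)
Your proposal is correct and follows essentially the same route as the paper: one sets $d\mu_u^\beta = |u|^p\,dA_\beta$, notes that $\|M_uE_{\mathcal{A}}(f)\|_{L_a^{p,\beta}}^p=\int_{\mathbb{D}}|E_{\mathcal{A}}(f)|^p\,d\mu_u^\beta$ so that boundedness is exactly statement (1) of Theorem \ref{18.5} for this measure, and then invokes the equivalence with statement (3). Your additional remarks on matching the quantifiers over $D_0$ and on the automatic finiteness of $\mu_u^\beta$ are sensible bookkeeping that the paper's own (terser) proof glosses over.
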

\begin{proof}
Suppose that ${{M}_{u}}E$ is bounded. So we can find $C>0$ such that
\[\Vert{M_u}E(f)\Vert_{L_a^{p,\beta }}^p \le C\left\| {f} \right\|_{L_a^{p,\alpha }}^p,\]
and
\begin{eqnarray}
 \nonumber\Vert{{M}_{u}}E(f)\Vert_{L_a^{p,\beta}}^{p}&=&\int_{{\mathbb{D}_{{}}}}{|}E(f)(z){{\mid }^{p}}{{{\left| u(z) \right|}}^{p}}d{{A}_{\beta }}(z)\\
  \nonumber&=&\int_{{\mathbb{D}_{{}}}}{|}E(f)(z){{\mid }^{p}}d{{\mu }_{u}^\beta}(z)\\
 \nonumber    &\le  & C\left\| f{{(z)}^{{}}} \right\|_{L_a^{p,\alpha}}^{p},\\
\end{eqnarray}
  In which ${{\mu }_{u}^\beta}={{\int_\mathbb{D}{\left| u(z) \right|}}^{p}}d{{A}_{\beta }}(z)$. This means that ${d\mu }_{u}^\beta$ is
an $(L_a^{p,\alpha})$-expectation Carleson measure. By  theorem\ref{18.5}, this is equivalent to
\[\int_{\mathbb{D}_0}{{{\left( \frac{1-{{\left| a  \right|}^{2}}}{{{\left| 1-\overline{a } z  \right|}^{2}}} \right)}^{(2+\alpha )}}}{\left| u(z) \right|}^{p}d{A}_\beta (z)\leq{\Psi}_{a }^{\alpha}(\mu_u^\beta)(z)\leq C.\]
This completes the proof.
\end{proof}

\begin{exam} Let $u$ be an analytic function on $ \mathbb{D} $, $ 0<p\le q<\infty $ and $\alpha ,\beta >-1$. If the conditional expectation operator $E$ is identity, then the Multiplication operator $ {{M}_{u}}$ is bounded from $ L_{a }^{p,\alpha }$ into $L_{a }^{q,\beta }$ if and only if there exists some $ C$ such thet
\[\int_D {{{\left( {\frac{{1 - {{\left| a \right|}^2}}}{{{{\left| {1 - \bar az} \right|}^2}}}} \right)}^{\frac{{(2 + \alpha )q}}{p}}}} {\left| {u(z)} \right|^q}d{A_\beta }(z) \le C.\]
    \end{exam}


\end{document}